\pgfplotsset{compat=newest}
\pgfplotsset{plot coordinates/math parser=false}
\newlength\figureheight
\newlength\figurewidth
\newlength\dotfigureheight
\newlength\dotfigurewidth
\newlength\dotsize
\definecolor{tol00}{HTML}{AA4499}
\definecolor{tol01}{HTML}{882255}
\definecolor{tol02}{HTML}{CC6677}
\definecolor{tol03}{HTML}{DDCC77}
\definecolor{tol04}{HTML}{999933}
\definecolor{tol05}{HTML}{117733}
\definecolor{tol06}{HTML}{44AA99}
\definecolor{tol07}{HTML}{88CCEE}
\definecolor{tol08}{HTML}{332288}
\colorlet{dotcolor}{tol00}
\colorlet{figcolor0}{tol00}
\colorlet{figcolor1}{tol04}
\colorlet{figcolor2}{tol08}
\newcommand*\patchAmsMathEnvironmentForLineno[1]{%
  \expandafter\let\csname old#1\expandafter\endcsname\csname #1\endcsname
  \expandafter\let\csname oldend#1\expandafter\endcsname\csname end#1\endcsname
  \renewenvironment{#1}%
     {\linenomath\csname old#1\endcsname}%
     {\csname oldend#1\endcsname\endlinenomath}}%
\newcommand*\patchBothAmsMathEnvironmentsForLineno[1]{%
  \patchAmsMathEnvironmentForLineno{#1}%
  \patchAmsMathEnvironmentForLineno{#1*}}%
\providecommand{\versionortoday}{\today}
\newcommand{\bi}{{\bm{\mathrm{i}}}}
\newcommand{\bj}{{\bm{\mathrm{j}}}}
\newcommand{\bk}{{\bm{\mathrm{k}}}}
\newcommand{\Gsp}{G_{\text{sparse}}}
\newcommand{\Gdn}{G_{\text{dense}}}
\newcommand{\dltx}{\prescript{}{\Delta}{x}}
\newcommand{\dltN}{\prescript{}{\Delta}{N}}
\newcommand{\ebvp}{e_{\text{BVP}}}
\newcommand{\einterp}{e_{\text{interp}}}
\newtheorem{assumption}{Assumption}
\DeclareMathOperator*{\argmin}{arg\,min}
\DeclareMathOperator*{\argmax}{arg\,max}
\newcommand{\der} [2]{\frac{d#1}{d#2}}
\newcommand{\pder} [2]{\frac{\partial#1}{\partial#2}}
\DeclarePairedDelimiter{\abs}{\lvert}{\rvert}%
\DeclarePairedDelimiter{\norm}{\lVert}{\rVert}
\let\oldabs\abs
\def\abs{\@ifstar{\oldabs}{\oldabs*}}
\let\oldnorm\norm
\def\norm{\@ifstar{\oldnorm}{\oldnorm*}}
\providecommand\given{}
\newcommand\SetSymbol[1][]{\nonscript\:#1\vert\nonscript\:\allowbreak}
\DeclarePairedDelimiterX\Set[1]\{\}{%
  \renewcommand\given{\SetSymbol[\delimsize]}
  #1
}
\journalname{myjournal}
\titlerunning{Sparse Grid Characteristics Method for Optimal Control and
  HJB Equations}
\title{Mitigating the Curse of Dimensionality: Sparse Grid Characteristics
  Method for Optimal Feedback Control and HJB Equations
  \thanks{This document has been approved for public release;
    its distribution is unlimited.}
}
\author{Wei Kang \and Lucas~C. Wilcox}
\date{\versionortoday}
\institute{Wei Kang \and Lucas~C. Wilcox \at{} Department of Applied Mathematics,
  Naval Postgraduate School, Monterey, CA \\
  \email{\{wkang,lwilcox\}@nps.edu}
}
\begin{document}

\maketitle

\begin{abstract}
  We address finding the semi-global solutions to optimal feedback control and the Hamilton--Jacobi--Bellman
  (HJB) equation.
  Using the solution of an HJB equation,
  a feedback optimal control law can be implemented in real-time with minimum
  computational load.
  However, except for systems with two or three state variables, using
  traditional techniques for numerically finding a semi-global solution to an
  HJB equation for general nonlinear systems is infeasible due to the curse of
  dimensionality.
  Here we present a new computational method for finding feedback optimal control and solving HJB equations
  which is able to mitigate the curse of dimensionality.
  We do not discretize the HJB equation directly, instead we introduce a sparse
  grid in the state space and use the Pontryagin's maximum principle to derive a set
  of necessary conditions in the form of a boundary value problem, also known as
  the characteristic equations, for each grid point.
  Using this approach, the method is spatially causality free, which enjoys the
  advantage of perfect parallelism on a sparse grid.
  Compared with dense grids, a sparse grid has a significantly reduced size
  which is feasible for systems with relatively high dimensions, such as the
  $6$-D system shown in the examples.
  Once the solution obtained at each grid point, high-order accurate polynomial
  interpolation is used to approximate the feedback control at arbitrary points.
  We prove an upper bound for the approximation error and approximate it
  numerically.
  This sparse grid characteristics method is demonstrated with two examples of
  rigid body attitude control using momentum wheels.
  keywords{optimal feedback control \and Hamilton--Jacobi--Bellman equation \and
  sparse grid \and method of characteristics \and rigid body attitude control}
\end{abstract}

\section{Introduction}
The optimal feedback control of nonlinear systems is a challenging problem.
Using dynamic programming, the feedback optimal control law is constructed based
on the solution of a partial differential equation (PDE) that is called the
Hamilton--Jacobi--Bellman (HJB) equation.
This theoretically elegant approach suffers some difficulties in computation due
to the curse of dimensionality, a term that was coined by Richard~E. Bellman
when considering problems in dynamic optimization, which relates to the fact
that the size of the discretized problem in solving HJB equations increases
exponentially with the dimension.
Finding an approximate solution to HJB-type of equations in a local neighborhood
of a trajectory has been extensively studied, see for example
\citet{albrekht},
\citet{cacace},
\citet{kang-isidori},
\citet{lukes},
\citet{navasca},
\citet{falcone2},
and the references therein.
Some of the previously proposed methods can be applied to systems with high
dimensions.
However, finding semi-global solutions to HJB equations, i.e., solutions
satisfying a required accuracy in a given domain, faces the curse of
dimensionality.

We present a new computational approach to finding semi-global solutions of
optimal feedback control and HJB equations for nonlinear systems. The method is
designed to mitigate the curse of dimensionality.
Our approach does not discretize the HJB equation directly, instead we introduce
a sparse grid in the state space and use Pontryagin's maximum principle (PMP) to
derive a set of necessary conditions in the form of a boundary value problem
(BVP), also known as the characteristic equations, for each grid point
independently.
For details on PMP and its relationship with HJB equations, the reader
is referred to \citet{pontryagin}, \citet{barron-jensen}, and
\citet{bardi-capuzzo}.
The curse of dimensionality is mitigated by using sparse grids. The idea of
sparse grids is based on Smolyak's work~\cite{smolyak}.  The reader is also
referred to \citet{bungartz}, \citet{garcke}, and \citet{zenger} for more
details and different perspectives. Relative to a dense grid, the size
of the sparse grid is significantly reduced. In the propose computational
method, the solution at each grid point is found using a Lobatto~IIIa method
that solves a two-point BVP\@. Different from many existing algorithms of
solving PDEs, this approach is not based on spatial causality. A significant
advantage of this causality free method lies in its perfect parallelism, a
desirable property for modern computational equipment with many-core clusters.
Some results on the rate of convergence and the upper bound of approximation
error are proved.

The method is exemplified by two numerical examples of rigid body attitude
control using momentum wheels. In this case, the HJB equation is six
dimensional. The second example in itself is interesting, in which the system
has two pairs of momentum wheels. The system is uncontrollable, which makes
optimal feedback control difficult.
To the best of our knowledge, no solutions have been found for the HJB equation of this
problem.

\section{Problem Formulation}
\label{sec_formulation}
An optimal feedback control law is a function
\begin{equation*}
  u^{*}(t,x)
\end{equation*}
that minimizes the cost functional
\begin{equation*}\label{sat_cost}
  \int_{t}^{T} L(t,\vec{x}(s),\vec{u}(s))\; ds + h(\vec{x}(T))
\end{equation*}
subjecting to the control system
\begin{subequations}\label{ode}
  \begin{alignat}{2}
    \dot{\vec{x}}(s) &= \vec{f}(s,\vec{x}(s),\vec{u}(s)) && \qquad (t < s < T), \\
    \vec{x}(t) &= x.
  \end{alignat}
\end{subequations}
In this formulation,
$\vec{x} : (t,T)\to\Re^n$ is the state,
$\vec{u} : (t,T)\to\Re^m$ is the control,
$L : (t,T)\times\Re^n\times A\to\Re$ is running cost,
$h : (t,T)\times\Re^n\to\Re$ is the final cost,
$\vec{f} : (t,T)\times\Re^n\times A\to\Re^n$ is a bounded, Lipschitz continuous
function, and
$A\subset\Re^m$ is a compact set.
Here $t\in\Re$ is the initial time and $x\in\Re^n$ is the initial condition.
For the simplicity of discussion, we assume that the final
time, $T\in\Re$, is fixed.  Following the standard approach in optimal
control, we define the Hamiltonian
\begin{equation*}
  H(t,x,\lambda,u)=L(t,x,u)+\lambda^\mathsf{T} f(t,x,u)
\end{equation*}
where $\lambda\in\Re^n$ is the costate and $u\in A$. The function
\begin{equation*}\label{ustar}
  u^\ast(t,x,\lambda) = \argmin_{u} H(t,x,\lambda,u)
\end{equation*}
minimizes the Hamiltonian. The value function is defined as
\begin{equation*}
  V(t, x) = \inf_{\vec{u}} \int_{t}^{T} L(s,\vec{x}(s),\vec{u}(s))\; ds +
  h(\vec{x}(T)) \qquad (0 \le t < T),
\end{equation*}
where $\vec{x}(s)$ satisfies the identity~\eqref{ode}.  Further $V$ satisfies
the HJB equation, a PDE, with a final time condition given as
\begin{subequations}\label{sat_HJB}
\begin{alignat}{2}
  V_t(t,x)+H^\ast\left(t,x,V_x^\mathsf{T}(t,x)\right)&=0 &&\qquad (x\in\Re^n,0 \le t < T),  \\
  V(T,x)&=h(x) &&\qquad (x\in\Re^n),
\end{alignat}
\end{subequations}
where
$H^\ast(t,x,\lambda)=H(t,x,\lambda,u^\ast)$,
$V_t=\frac{\partial V}{\partial t}$, and
$V_x=\frac{\partial V}{\partial x}$.
If equation~\eqref{sat_HJB} can be solved, the feedback control law is a
function defined as
\begin{equation}\label{opt_u}
  u^{*}(t,x)=u^\ast\left(t,x,V_x^\mathsf{T}(t,x)\right).
\end{equation}
The design and control of engineering systems involve both on-line and
off-line computations. In the method proposed here, the HJB equation
is solved in the off-line design phase. Once the solution of the HJB
equation is found on a sparse grid, the on-line computation for
real-time feedback control is carried out using interpolation, a
numerical process that is simple and reliable. This approach offsets
the main computational load from on-line computation to an off-line
design phase. In addition, off-line computations allow one to use more
powerful computers than those onboard systems such as satellites or
unmanned vehicles. Because of the causality-free property, which is
explained later, using parallel computers significantly reduces the
required computational time when solving a HJB equation.
In this paper, we make the following assumption.

\begin{assumption}\label{ass1}
The optimal control is uniquely determined by the PMP at each point in the state
space.
\end{assumption}
If $H(t,x,\lambda, u)$ is a strictly convex function of $(x,u)$ over
an open convex set containing all the admissible values of $(x,u)$ at
fixed $t$ and $\lambda$, Assumption~\ref{ass1} holds true.  In general,
Assumption~\ref{ass1} can be guaranteed based on necessary and sufficient
conditions of optimal control, which has a vast
literature of publications, including both classic and viscosity solutions
\cite{Fleming-Soner,bardi-capuzzo}.
For problems with a proved existence and
uniqueness of solutions, finding the optimal feedback control for practical
real-time applications is still a difficult problem. If a system has four or
more state variables, finding a desecrate solution and implementing real-time
interpolation for feedback control suffer the curse of dimensionality. The main
contribution of this paper is that, for a well-defined problem of optimal
control with a moderate dimension, it is possible to achieve optimal feedback
control using interpolation on a discrete approximate solution.

\section{The Sparse Grid Characteristics Method}
\label{sec_algorithm}

In many numerical methods for HJB equations, which are typically solved backward
in time, the discretization is based on spatial causality and the computation is
explicit in time.
The value of the solution function $V(t,x)$ at a grid point is computed at an
earlier time using the known value of the function at neighboring grid points at
a later time.
This coupling usually comes from the discretization of the spatial derivatives.
For HJB equations of high dimensions, in our examples the dimension $d=6$,
solving the equation using traditional algorithms based on dense grids is
computationally challenging.
For instance, in a six dimensional space, if $2^5=32$ grid points are used to
approximate a single variable, which is quite small, the total number of grid
points for a $6$-D problem is over $10^9$.
If $100$ points are used for a single variable, then the size of the dense grid
is $10^{12}$.
The curse of dimensionality is a bottleneck problem in solving HJB equations for
practical applications.

To mitigate the curse of dimensionality, we introduce a causality free
computational method.
It consists of two components:  (1) A solver that can find the optimal control
and the value of $V(t,x)$ at any grid point; the computation is independent of
the value of $V(t,x)$ at other points in the state space, i.e., causality free.
(2) A set of grid points, such as a sparse grid, with a reduced size to make the
problem tractable.
The causality free method introduced in this section is based on BVP solvers and
sparse grids.
The goal is to solve the problem of optimal control and its associated HJB
equation with a moderate dimension. In this paper, we use a $6$-D example to
illustrate the algorithm.

\emph{Why causality free?}
From a conventional viewpoint of computation, solving a two-point BVP is not an
efficient approach for PDEs.
However, the causality free method is perfectly parallel.
In fact, each grid point can be assigned a CPU core.
The computation at a grid point requires no communication with the computation
process at other grid points.
Although not preferred in a serial computational environment, causality free
algorithms can easily be implemented in massively parallel computers.
In addition, causality free algorithms are ideal for sparse grids in which the
space between adjacent grid points varies significantly.
The combination of sparse grids, BVP solvers, and parallel computation makes it
possible to mitigate the curse of dimensionality effectively for problems in
which $d$ is not too large.

\subsection{A causality-free method using the necessary condition of optimal control}
In contrast to traditional PDE discretizations, our proposed technique does not
discretize the HJB equations directly but instead uses the PMP to derive a set
of necessary conditions in the form of a BVP for each grid point, also known as
the method of characteristics.  As a result, the computation of the solution at
an initial point in space is independent of other points.

The optimal trajectory for the optimal feedback control law described in
Section~\ref{sec_formulation} is a solution of the two-point BVP
\begin{subequations}\label{eq:hamchar}
  \begin{alignat}{1}
    \dot{\vec{x}}(s)       &=\phantom{-}
    {\left(\pder{H}{\lambda}(s,\vec{x}(s),\vec{\lambda}(s),u^\ast(s,\vec{x}(s),\vec{\lambda}(s)))\right)}^\mathsf{T},\\
    \dot{\vec{\lambda}}(s) &=
     - {\left(\pder{H}{x}(s,\vec{x}(s),\vec{\lambda}(s),u^\ast(s,\vec{x}(s),\vec{\lambda}(s)))\right)}^\mathsf{T},\\
\dot{\vec{z}}(s)       &= L(s,\vec{x}(s),u^\ast(s,\vec{x}(s),\vec{\lambda}(s))),
  \end{alignat}
  where $t \le s \le T$ with the boundary conditions
  \begin{alignat}{1}
    \vec{x}(t) &= x, \\
    \vec{\lambda}(T) &= {\left(\der{h}{x}(\vec{x}(T))\right)}^\mathsf{T},\\
    \vec{z}(t)  &= 0.
  \end{alignat}
\end{subequations}
The optimal control and the minimum costs are
\begin{align}
\label{optuv}
u^\ast(t,x)&=u^\ast(t,x,\vec{\lambda}(t)), & V(t,x)&=\vec{z}(T)+h(\vec{x}(T)).
\end{align}

Given any grid point, $x$, we can solve the BVP~\eqref{eq:hamchar} and use the
identities~\eqref{optuv} to find the optimal control and the corresponding
minimum cost without using the value of $V(t,x)$ in any nearby points, i.e., the
computation is causality free.  Numerical algorithms for solving BVPs similar
to~\eqref{eq:hamchar} have been studied by many authors. In the examples, we
adopt an algorithm based on a four-point Lobatto~IIIa formula for our BVP
solver. This is a collocation formula and the collocation polynomial provides a
solution that is fifth-order accurate (see~\citet{kierzenka} for more detail).
We would like to point out that the computation is not limited to the
Lobatto~IIIa BVP solver. The problem of optimal control at each grid point can
be solved using any computational method.

\subsection{Sparse grids}
In the approximation of multivariable functions, sparse grid interpolation is an
approach in which the approximation is build on a subset of a dense grid with a
significantly reduced size. Sparse grids are derived from the Smolyak's
construction~\cite{smolyak}. Although the original idea was invented more than
fifty years ago, some recent work reveals potentials of its applications
\cite{kangwilcox,barthelmann,klimke,bokanowski,bungartz,xiu}. It is a known fact
that the size of sparse grids increases with the dimension, $d$, on the order of
\begin{equation*}
  O(N{(\log N)}^{d-1}),
\end{equation*}
which is in sharp contrast to the size of the corresponding dense grid
\begin{equation*}
  O(N^d).
\end{equation*}
Here $N=2^{q-d}$ where $q$ is a measurement of the level of refinement of the
sparse grid.
Obviously, the significantly reduced number of grid points has its impact on the
accuracy. For example, an upper bound of interpolation error using a classic
sparse grid satisfies
\begin{equation*}
  \norm{e}_{L^2} = O(N^{-2}{(\log N)}^{d-1})
\end{equation*}
for all functions with bounded mixed derivatives up to the second order.
Compared with the error bound using a dense grid, which is $O(N^{-2})$, we pay a
small price in terms of accuracy for problems with a moderate dimension and we
achieve a significantly reduced size of the grid.

Sparse grids have a hierarchical structure. For each variable, the set of grid
points contains several layers of subsets, denoted by $X^i$. Let $N_i$ be
the number of points in $X^i$. These subsets are assumed to have a telescope
structure, $X^{i-1} \subset X^i$ for $i> 1$. For illustration purposes, we
exemplify the definition of the classic sparse grid using equally spaced nodes
in ${[0,1]}^d$ as
\begin{alignat}{1}\label{classicalX}
  N_i=2^{i-1}+1, &\quad X^i=\Set*{\frac{k-1}{2^{i-1}} \given  k=1,2,\ldots, N_i},
\end{alignat}
for $i\ge1$.

The set of points in $X^i$ but not in $X^{i-1}$ is denoted by $\Delta X^i$; and
the number of points in the set is $\dltN_i$, i.e.,
\begin{alignat*}{2}
  \Delta X^1 &= X^1, \\
  \Delta X^i &= X^i-X^{i-1}      &\qquad (i\geq 2), \\
     \dltN_i &= \abs{\Delta X^i} &\qquad (i\geq 1).
\end{alignat*}
In this paper, the points in $X^i$ are represented by $x^i_j$,
$1\leq j \leq N_i$, and the points in $\Delta X^i$ are represented by
$\dltx^i_j$, $1\leq j \leq \dltN_i$, i.e.,
\begin{align*}
       X^i&=\Set*{x^i_1, x^i_2, \ldots, x^i_{N_i}}, \\
\Delta X^i&=\Set*{\dltx^i_1, \dltx^i_2, \ldots, \dltx^i_{\dltN_i}},
\end{align*}
for $i\ge1$.
In the sequel, we adopt the multi-index notations
\begin{align*}
         \bi  &= \begin{bmatrix} i_1&i_2&\cdots &i_d\end{bmatrix},\\
    \abs{\bi} &= i_1+i_2+\cdots+i_d,\\
    x^\bi_\bj &=(x^{i_1}_{j_1},x^{i_2}_{j_2},\ldots,x^{i_d}_{j_d}),\\
    \Delta X^{\bi}&=\Delta X^{i_1}\times \Delta X^{i_2}\times
                    \cdots \times \Delta X^{i_d}.
\end{align*}
The dense grid build on $X^q$ for an integer $q>0$, denoted as $\Gdn^q$, is
\begin{equation*}
  \Gdn^q = X^{q}\times \cdots \times X^{q} =
    \bigcup_{ 1\leq i_1,\ldots,i_d \leq q} \Delta X^{\bi}.
\end{equation*}
Following Smolyak's approximation algorithm~\cite{barthelmann,smolyak}, the
sparse grid, denoted by $\Gsp^q$, is defined as
\begin{equation*}
  \Gsp^q =\bigcup_{\abs{\bi}\leq q}\Delta X^{\bi}.
\end{equation*}
Two plots of $2$-D sparse grids are shown in Figure~\ref{fig_sparsegrid_classic}
for $q=6$ and $8$. If $q=8$, $\Gsp^q$ has a total of $385$ grid points whereas
the corresponding dense grid, $\Gdn^q$, has ${(2^6+1)}^2=4225$ points. The difference
becomes increasingly significant for higher dimensions.
\begin{figure}
  \centering
\begin{tikzpicture}

\begin{axis}[%
width=\dotfigurewidth,
height=\dotfigureheight,
scale only axis,
ticks=none,
separate axis lines,
xmin=0,
xmax=1,
ymin=0,
ymax=1
]
\addplot [only marks, mark=*, mark size=\dotsize, color=dotcolor,forget plot]
  table[row sep=crcr]{0	0\\
0	1\\
1	0\\
1	1\\
0	0.5\\
1	0.5\\
0	0.25\\
0	0.75\\
1	0.25\\
1	0.75\\
0	0.125\\
0	0.375\\
0	0.625\\
0	0.875\\
1	0.125\\
1	0.375\\
1	0.625\\
1	0.875\\
0	0.0625\\
0	0.1875\\
0	0.3125\\
0	0.4375\\
0	0.5625\\
0	0.6875\\
0	0.8125\\
0	0.9375\\
1	0.0625\\
1	0.1875\\
1	0.3125\\
1	0.4375\\
1	0.5625\\
1	0.6875\\
1	0.8125\\
1	0.9375\\
0.5	0\\
0.5	1\\
0.5	0.5\\
0.5	0.25\\
0.5	0.75\\
0.5	0.125\\
0.5	0.375\\
0.5	0.625\\
0.5	0.875\\
0.25	0\\
0.25	1\\
0.75	0\\
0.75	1\\
0.25	0.5\\
0.75	0.5\\
0.25	0.25\\
0.25	0.75\\
0.75	0.25\\
0.75	0.75\\
0.125	0\\
0.125	1\\
0.375	0\\
0.375	1\\
0.625	0\\
0.625	1\\
0.875	0\\
0.875	1\\
0.125	0.5\\
0.375	0.5\\
0.625	0.5\\
0.875	0.5\\
0.0625	0\\
0.0625	1\\
0.1875	0\\
0.1875	1\\
0.3125	0\\
0.3125	1\\
0.4375	0\\
0.4375	1\\
0.5625	0\\
0.5625	1\\
0.6875	0\\
0.6875	1\\
0.8125	0\\
0.8125	1\\
0.9375	0\\
0.9375	1\\
};
\end{axis}
\end{tikzpicture}%
   \qquad
\begin{tikzpicture}

\begin{axis}[%
width=\dotfigurewidth,
height=\dotfigureheight,
ticks=none,
scale only axis,
separate axis lines,
xmin=0,
xmax=1,
ymin=0,
ymax=1
]
\addplot [color=dotcolor,only marks,mark=*, mark size=\dotsize,forget plot]
  table[row sep=crcr]{0	0\\
0	1\\
1	0\\
1	1\\
0	0.5\\
1	0.5\\
0	0.25\\
0	0.75\\
1	0.25\\
1	0.75\\
0	0.125\\
0	0.375\\
0	0.625\\
0	0.875\\
1	0.125\\
1	0.375\\
1	0.625\\
1	0.875\\
0	0.0625\\
0	0.1875\\
0	0.3125\\
0	0.4375\\
0	0.5625\\
0	0.6875\\
0	0.8125\\
0	0.9375\\
1	0.0625\\
1	0.1875\\
1	0.3125\\
1	0.4375\\
1	0.5625\\
1	0.6875\\
1	0.8125\\
1	0.9375\\
0	0.03125\\
0	0.09375\\
0	0.15625\\
0	0.21875\\
0	0.28125\\
0	0.34375\\
0	0.40625\\
0	0.46875\\
0	0.53125\\
0	0.59375\\
0	0.65625\\
0	0.71875\\
0	0.78125\\
0	0.84375\\
0	0.90625\\
0	0.96875\\
1	0.03125\\
1	0.09375\\
1	0.15625\\
1	0.21875\\
1	0.28125\\
1	0.34375\\
1	0.40625\\
1	0.46875\\
1	0.53125\\
1	0.59375\\
1	0.65625\\
1	0.71875\\
1	0.78125\\
1	0.84375\\
1	0.90625\\
1	0.96875\\
0	0.015625\\
0	0.046875\\
0	0.078125\\
0	0.109375\\
0	0.140625\\
0	0.171875\\
0	0.203125\\
0	0.234375\\
0	0.265625\\
0	0.296875\\
0	0.328125\\
0	0.359375\\
0	0.390625\\
0	0.421875\\
0	0.453125\\
0	0.484375\\
0	0.515625\\
0	0.546875\\
0	0.578125\\
0	0.609375\\
0	0.640625\\
0	0.671875\\
0	0.703125\\
0	0.734375\\
0	0.765625\\
0	0.796875\\
0	0.828125\\
0	0.859375\\
0	0.890625\\
0	0.921875\\
0	0.953125\\
0	0.984375\\
1	0.015625\\
1	0.046875\\
1	0.078125\\
1	0.109375\\
1	0.140625\\
1	0.171875\\
1	0.203125\\
1	0.234375\\
1	0.265625\\
1	0.296875\\
1	0.328125\\
1	0.359375\\
1	0.390625\\
1	0.421875\\
1	0.453125\\
1	0.484375\\
1	0.515625\\
1	0.546875\\
1	0.578125\\
1	0.609375\\
1	0.640625\\
1	0.671875\\
1	0.703125\\
1	0.734375\\
1	0.765625\\
1	0.796875\\
1	0.828125\\
1	0.859375\\
1	0.890625\\
1	0.921875\\
1	0.953125\\
1	0.984375\\
0.5	0\\
0.5	1\\
0.5	0.5\\
0.5	0.25\\
0.5	0.75\\
0.5	0.125\\
0.5	0.375\\
0.5	0.625\\
0.5	0.875\\
0.5	0.0625\\
0.5	0.1875\\
0.5	0.3125\\
0.5	0.4375\\
0.5	0.5625\\
0.5	0.6875\\
0.5	0.8125\\
0.5	0.9375\\
0.5	0.03125\\
0.5	0.09375\\
0.5	0.15625\\
0.5	0.21875\\
0.5	0.28125\\
0.5	0.34375\\
0.5	0.40625\\
0.5	0.46875\\
0.5	0.53125\\
0.5	0.59375\\
0.5	0.65625\\
0.5	0.71875\\
0.5	0.78125\\
0.5	0.84375\\
0.5	0.90625\\
0.5	0.96875\\
0.25	0\\
0.25	1\\
0.75	0\\
0.75	1\\
0.25	0.5\\
0.75	0.5\\
0.25	0.25\\
0.25	0.75\\
0.75	0.25\\
0.75	0.75\\
0.25	0.125\\
0.25	0.375\\
0.25	0.625\\
0.25	0.875\\
0.75	0.125\\
0.75	0.375\\
0.75	0.625\\
0.75	0.875\\
0.25	0.0625\\
0.25	0.1875\\
0.25	0.3125\\
0.25	0.4375\\
0.25	0.5625\\
0.25	0.6875\\
0.25	0.8125\\
0.25	0.9375\\
0.75	0.0625\\
0.75	0.1875\\
0.75	0.3125\\
0.75	0.4375\\
0.75	0.5625\\
0.75	0.6875\\
0.75	0.8125\\
0.75	0.9375\\
0.125	0\\
0.125	1\\
0.375	0\\
0.375	1\\
0.625	0\\
0.625	1\\
0.875	0\\
0.875	1\\
0.125	0.5\\
0.375	0.5\\
0.625	0.5\\
0.875	0.5\\
0.125	0.25\\
0.125	0.75\\
0.375	0.25\\
0.375	0.75\\
0.625	0.25\\
0.625	0.75\\
0.875	0.25\\
0.875	0.75\\
0.125	0.125\\
0.125	0.375\\
0.125	0.625\\
0.125	0.875\\
0.375	0.125\\
0.375	0.375\\
0.375	0.625\\
0.375	0.875\\
0.625	0.125\\
0.625	0.375\\
0.625	0.625\\
0.625	0.875\\
0.875	0.125\\
0.875	0.375\\
0.875	0.625\\
0.875	0.875\\
0.0625	0\\
0.0625	1\\
0.1875	0\\
0.1875	1\\
0.3125	0\\
0.3125	1\\
0.4375	0\\
0.4375	1\\
0.5625	0\\
0.5625	1\\
0.6875	0\\
0.6875	1\\
0.8125	0\\
0.8125	1\\
0.9375	0\\
0.9375	1\\
0.0625	0.5\\
0.1875	0.5\\
0.3125	0.5\\
0.4375	0.5\\
0.5625	0.5\\
0.6875	0.5\\
0.8125	0.5\\
0.9375	0.5\\
0.0625	0.25\\
0.0625	0.75\\
0.1875	0.25\\
0.1875	0.75\\
0.3125	0.25\\
0.3125	0.75\\
0.4375	0.25\\
0.4375	0.75\\
0.5625	0.25\\
0.5625	0.75\\
0.6875	0.25\\
0.6875	0.75\\
0.8125	0.25\\
0.8125	0.75\\
0.9375	0.25\\
0.9375	0.75\\
0.03125	0\\
0.03125	1\\
0.09375	0\\
0.09375	1\\
0.15625	0\\
0.15625	1\\
0.21875	0\\
0.21875	1\\
0.28125	0\\
0.28125	1\\
0.34375	0\\
0.34375	1\\
0.40625	0\\
0.40625	1\\
0.46875	0\\
0.46875	1\\
0.53125	0\\
0.53125	1\\
0.59375	0\\
0.59375	1\\
0.65625	0\\
0.65625	1\\
0.71875	0\\
0.71875	1\\
0.78125	0\\
0.78125	1\\
0.84375	0\\
0.84375	1\\
0.90625	0\\
0.90625	1\\
0.96875	0\\
0.96875	1\\
0.03125	0.5\\
0.09375	0.5\\
0.15625	0.5\\
0.21875	0.5\\
0.28125	0.5\\
0.34375	0.5\\
0.40625	0.5\\
0.46875	0.5\\
0.53125	0.5\\
0.59375	0.5\\
0.65625	0.5\\
0.71875	0.5\\
0.78125	0.5\\
0.84375	0.5\\
0.90625	0.5\\
0.96875	0.5\\
0.015625	0\\
0.015625	1\\
0.046875	0\\
0.046875	1\\
0.078125	0\\
0.078125	1\\
0.109375	0\\
0.109375	1\\
0.140625	0\\
0.140625	1\\
0.171875	0\\
0.171875	1\\
0.203125	0\\
0.203125	1\\
0.234375	0\\
0.234375	1\\
0.265625	0\\
0.265625	1\\
0.296875	0\\
0.296875	1\\
0.328125	0\\
0.328125	1\\
0.359375	0\\
0.359375	1\\
0.390625	0\\
0.390625	1\\
0.421875	0\\
0.421875	1\\
0.453125	0\\
0.453125	1\\
0.484375	0\\
0.484375	1\\
0.515625	0\\
0.515625	1\\
0.546875	0\\
0.546875	1\\
0.578125	0\\
0.578125	1\\
0.609375	0\\
0.609375	1\\
0.640625	0\\
0.640625	1\\
0.671875	0\\
0.671875	1\\
0.703125	0\\
0.703125	1\\
0.734375	0\\
0.734375	1\\
0.765625	0\\
0.765625	1\\
0.796875	0\\
0.796875	1\\
0.828125	0\\
0.828125	1\\
0.859375	0\\
0.859375	1\\
0.890625	0\\
0.890625	1\\
0.921875	0\\
0.921875	1\\
0.953125	0\\
0.953125	1\\
0.984375	0\\
0.984375	1\\
};
\end{axis}
\end{tikzpicture}%
   \caption{The classic sparse grid in ${[0, 1]}^2$, $q=6$ and
    $q=8$}\label{fig_sparsegrid_classic}
\end{figure}
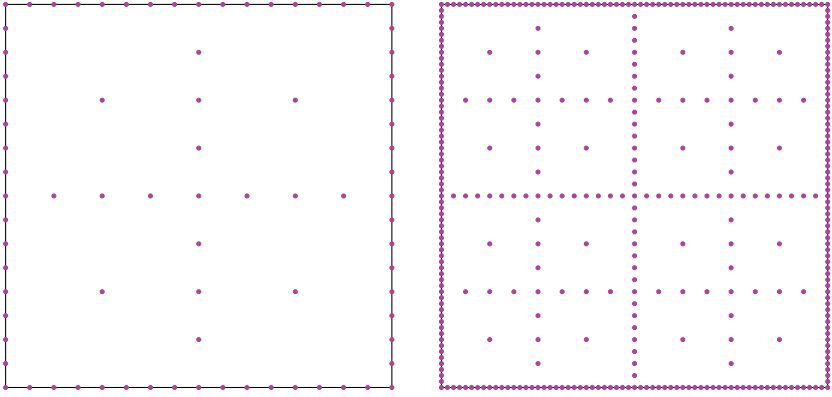

Sparse grids can be build using other nested sequences of grids $X^i$, $i\geq
1$.  For instance, a modified sparse grid is defined using
\begin{equation}\label{modified}
  \begin{dcases}
    N_1=1,         & X^1=\Set*{\frac{1}{2}};\\
    N_i=2^{i-1}+1, & X^i=\Set*{\frac{k-1}{2^{i-1}}\given k=1,2,\ldots,N_i};
  \end{dcases}
\end{equation}
for $i>1$.  Except for $i=1$, it is identical to the classic sparse
grid~\eqref{classicalX}.  The size of $\Gsp$ is further reduced. The modified
sparse grids for $q=6$ and $q=8$ are shown in
Figure~\ref{fig_sparsegrid_modified}. When $q=8$, the total number of points is
$321$.
\begin{figure}
  \centering
\begin{tikzpicture}

\begin{axis}[%
width=\dotfigurewidth,
height=\dotfigureheight,
ticks=none,
scale only axis,
separate axis lines,
xmin=0,
xmax=1,
ymin=0,
ymax=1
]
\addplot [only marks, mark=*, mark size=\dotsize, color=dotcolor,forget plot]
  table[row sep=crcr]{0.5	0.5\\
0	0.5\\
1	0.5\\
0.5	0\\
0.5	1\\
0.25	0.5\\
0.75	0.5\\
0	0\\
1	0\\
0	1\\
1	1\\
0.5	0.25\\
0.5	0.75\\
0.125	0.5\\
0.375	0.5\\
0.625	0.5\\
0.875	0.5\\
0.25	0\\
0.75	0\\
0.25	1\\
0.75	1\\
0	0.25\\
1	0.25\\
0	0.75\\
1	0.75\\
0.5	0.125\\
0.5	0.375\\
0.5	0.625\\
0.5	0.875\\
0.0625	0.5\\
0.1875	0.5\\
0.3125	0.5\\
0.4375	0.5\\
0.5625	0.5\\
0.6875	0.5\\
0.8125	0.5\\
0.9375	0.5\\
0.125	0\\
0.375	0\\
0.625	0\\
0.875	0\\
0.125	1\\
0.375	1\\
0.625	1\\
0.875	1\\
0.25	0.25\\
0.75	0.25\\
0.25	0.75\\
0.75	0.75\\
0	0.125\\
1	0.125\\
0	0.375\\
1	0.375\\
0	0.625\\
1	0.625\\
0	0.875\\
1	0.875\\
0.5	0.0625\\
0.5	0.1875\\
0.5	0.3125\\
0.5	0.4375\\
0.5	0.5625\\
0.5	0.6875\\
0.5	0.8125\\
0.5	0.9375\\
};
\end{axis}
\end{tikzpicture}%
   \qquad
\begin{tikzpicture}

\begin{axis}[%
width=\dotfigurewidth,
height=\dotfigureheight,
ticks=none,
scale only axis,
separate axis lines,
xmin=0,
xmax=1,
ymin=0,
ymax=1
]
\addplot [only marks, mark=*, mark size=\dotsize, color=dotcolor,forget plot]
  table[row sep=crcr]{0.5	0.5\\
0	0.5\\
1	0.5\\
0.5	0\\
0.5	1\\
0.25	0.5\\
0.75	0.5\\
0	0\\
1	0\\
0	1\\
1	1\\
0.5	0.25\\
0.5	0.75\\
0.125	0.5\\
0.375	0.5\\
0.625	0.5\\
0.875	0.5\\
0.25	0\\
0.75	0\\
0.25	1\\
0.75	1\\
0	0.25\\
1	0.25\\
0	0.75\\
1	0.75\\
0.5	0.125\\
0.5	0.375\\
0.5	0.625\\
0.5	0.875\\
0.0625	0.5\\
0.1875	0.5\\
0.3125	0.5\\
0.4375	0.5\\
0.5625	0.5\\
0.6875	0.5\\
0.8125	0.5\\
0.9375	0.5\\
0.125	0\\
0.375	0\\
0.625	0\\
0.875	0\\
0.125	1\\
0.375	1\\
0.625	1\\
0.875	1\\
0.25	0.25\\
0.75	0.25\\
0.25	0.75\\
0.75	0.75\\
0	0.125\\
1	0.125\\
0	0.375\\
1	0.375\\
0	0.625\\
1	0.625\\
0	0.875\\
1	0.875\\
0.5	0.0625\\
0.5	0.1875\\
0.5	0.3125\\
0.5	0.4375\\
0.5	0.5625\\
0.5	0.6875\\
0.5	0.8125\\
0.5	0.9375\\
0.03125	0.5\\
0.09375	0.5\\
0.15625	0.5\\
0.21875	0.5\\
0.28125	0.5\\
0.34375	0.5\\
0.40625	0.5\\
0.46875	0.5\\
0.53125	0.5\\
0.59375	0.5\\
0.65625	0.5\\
0.71875	0.5\\
0.78125	0.5\\
0.84375	0.5\\
0.90625	0.5\\
0.96875	0.5\\
0.0625	0\\
0.1875	0\\
0.3125	0\\
0.4375	0\\
0.5625	0\\
0.6875	0\\
0.8125	0\\
0.9375	0\\
0.0625	1\\
0.1875	1\\
0.3125	1\\
0.4375	1\\
0.5625	1\\
0.6875	1\\
0.8125	1\\
0.9375	1\\
0.125	0.25\\
0.375	0.25\\
0.625	0.25\\
0.875	0.25\\
0.125	0.75\\
0.375	0.75\\
0.625	0.75\\
0.875	0.75\\
0.25	0.125\\
0.75	0.125\\
0.25	0.375\\
0.75	0.375\\
0.25	0.625\\
0.75	0.625\\
0.25	0.875\\
0.75	0.875\\
0	0.0625\\
1	0.0625\\
0	0.1875\\
1	0.1875\\
0	0.3125\\
1	0.3125\\
0	0.4375\\
1	0.4375\\
0	0.5625\\
1	0.5625\\
0	0.6875\\
1	0.6875\\
0	0.8125\\
1	0.8125\\
0	0.9375\\
1	0.9375\\
0.5	0.03125\\
0.5	0.09375\\
0.5	0.15625\\
0.5	0.21875\\
0.5	0.28125\\
0.5	0.34375\\
0.5	0.40625\\
0.5	0.46875\\
0.5	0.53125\\
0.5	0.59375\\
0.5	0.65625\\
0.5	0.71875\\
0.5	0.78125\\
0.5	0.84375\\
0.5	0.90625\\
0.5	0.96875\\
0.015625	0.5\\
0.046875	0.5\\
0.078125	0.5\\
0.109375	0.5\\
0.140625	0.5\\
0.171875	0.5\\
0.203125	0.5\\
0.234375	0.5\\
0.265625	0.5\\
0.296875	0.5\\
0.328125	0.5\\
0.359375	0.5\\
0.390625	0.5\\
0.421875	0.5\\
0.453125	0.5\\
0.484375	0.5\\
0.515625	0.5\\
0.546875	0.5\\
0.578125	0.5\\
0.609375	0.5\\
0.640625	0.5\\
0.671875	0.5\\
0.703125	0.5\\
0.734375	0.5\\
0.765625	0.5\\
0.796875	0.5\\
0.828125	0.5\\
0.859375	0.5\\
0.890625	0.5\\
0.921875	0.5\\
0.953125	0.5\\
0.984375	0.5\\
0.03125	0\\
0.09375	0\\
0.15625	0\\
0.21875	0\\
0.28125	0\\
0.34375	0\\
0.40625	0\\
0.46875	0\\
0.53125	0\\
0.59375	0\\
0.65625	0\\
0.71875	0\\
0.78125	0\\
0.84375	0\\
0.90625	0\\
0.96875	0\\
0.03125	1\\
0.09375	1\\
0.15625	1\\
0.21875	1\\
0.28125	1\\
0.34375	1\\
0.40625	1\\
0.46875	1\\
0.53125	1\\
0.59375	1\\
0.65625	1\\
0.71875	1\\
0.78125	1\\
0.84375	1\\
0.90625	1\\
0.96875	1\\
0.0625	0.25\\
0.1875	0.25\\
0.3125	0.25\\
0.4375	0.25\\
0.5625	0.25\\
0.6875	0.25\\
0.8125	0.25\\
0.9375	0.25\\
0.0625	0.75\\
0.1875	0.75\\
0.3125	0.75\\
0.4375	0.75\\
0.5625	0.75\\
0.6875	0.75\\
0.8125	0.75\\
0.9375	0.75\\
0.125	0.125\\
0.375	0.125\\
0.625	0.125\\
0.875	0.125\\
0.125	0.375\\
0.375	0.375\\
0.625	0.375\\
0.875	0.375\\
0.125	0.625\\
0.375	0.625\\
0.625	0.625\\
0.875	0.625\\
0.125	0.875\\
0.375	0.875\\
0.625	0.875\\
0.875	0.875\\
0.25	0.0625\\
0.75	0.0625\\
0.25	0.1875\\
0.75	0.1875\\
0.25	0.3125\\
0.75	0.3125\\
0.25	0.4375\\
0.75	0.4375\\
0.25	0.5625\\
0.75	0.5625\\
0.25	0.6875\\
0.75	0.6875\\
0.25	0.8125\\
0.75	0.8125\\
0.25	0.9375\\
0.75	0.9375\\
0	0.03125\\
1	0.03125\\
0	0.09375\\
1	0.09375\\
0	0.15625\\
1	0.15625\\
0	0.21875\\
1	0.21875\\
0	0.28125\\
1	0.28125\\
0	0.34375\\
1	0.34375\\
0	0.40625\\
1	0.40625\\
0	0.46875\\
1	0.46875\\
0	0.53125\\
1	0.53125\\
0	0.59375\\
1	0.59375\\
0	0.65625\\
1	0.65625\\
0	0.71875\\
1	0.71875\\
0	0.78125\\
1	0.78125\\
0	0.84375\\
1	0.84375\\
0	0.90625\\
1	0.90625\\
0	0.96875\\
1	0.96875\\
0.5	0.015625\\
0.5	0.046875\\
0.5	0.078125\\
0.5	0.109375\\
0.5	0.140625\\
0.5	0.171875\\
0.5	0.203125\\
0.5	0.234375\\
0.5	0.265625\\
0.5	0.296875\\
0.5	0.328125\\
0.5	0.359375\\
0.5	0.390625\\
0.5	0.421875\\
0.5	0.453125\\
0.5	0.484375\\
0.5	0.515625\\
0.5	0.546875\\
0.5	0.578125\\
0.5	0.609375\\
0.5	0.640625\\
0.5	0.671875\\
0.5	0.703125\\
0.5	0.734375\\
0.5	0.765625\\
0.5	0.796875\\
0.5	0.828125\\
0.5	0.859375\\
0.5	0.890625\\
0.5	0.921875\\
0.5	0.953125\\
0.5	0.984375\\
};
\end{axis}
\end{tikzpicture}%
   \caption{Modified sparse grids in ${[0, 1]}^2$, $q=6$ and
    $q=8$}\label{fig_sparsegrid_modified}
\end{figure}
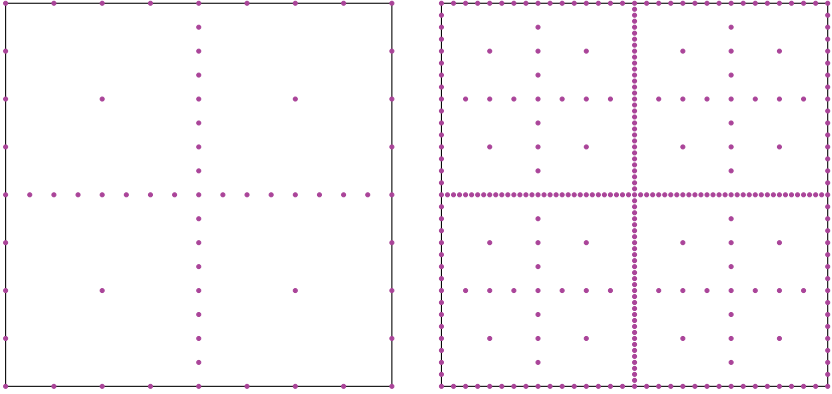

The Chebyshev Gauss--Lobatto (CGL) sparse grid is defined in a similar way.  The
the grid points are defined as
\begin{equation}\label{CGL}
  \begin{dcases}
    N_1=1,         & X^1=\Set*{\frac{1}{2}};\\
    N_i=2^{i-1}+1, & X^i=\Set*{\frac{1}{2}
                               \left(1-\cos \frac{(k-1)\pi}{2^{i-1}}\right)\given
                               k=1,2,\ldots,N_i};
  \end{dcases}
\end{equation}
for $i>1$.  Two examples of this grid are shown in
Figure~\ref{fig_sparsegrid_CGL}.
\begin{figure}
  \centering
\begin{tikzpicture}

\begin{axis}[%
width=\dotfigurewidth,
height=\dotfigureheight,
ticks=none,
scale only axis,
separate axis lines,
xmin=-1,
xmax=1,
ymin=-1,
ymax=1
]
\addplot [only marks, mark=*, mark size=\dotsize, color=dotcolor,forget plot]
  table[row sep=crcr]{-1	-1\\
-1	-0.923879532511287\\
-1	-0.707106781186548\\
-1	-0.38268343236509\\
-1	0\\
-1	0.38268343236509\\
-1	0.707106781186548\\
-1	0.923879532511287\\
-1	1\\
-0.98078528040323	0\\
-0.923879532511287	-1\\
-0.923879532511287	0\\
-0.923879532511287	1\\
-0.831469612302545	0\\
-0.707106781186548	-1\\
-0.707106781186548	-0.707106781186548\\
-0.707106781186548	0\\
-0.707106781186548	0.707106781186548\\
-0.707106781186548	1\\
-0.555570233019602	0\\
-0.38268343236509	-1\\
-0.38268343236509	0\\
-0.38268343236509	1\\
-0.195090322016128	0\\
0	-1\\
0	-0.98078528040323\\
0	-0.923879532511287\\
0	-0.831469612302545\\
0	-0.707106781186548\\
0	-0.555570233019602\\
0	-0.38268343236509\\
0	-0.195090322016128\\
0	0\\
0	0.195090322016128\\
0	0.38268343236509\\
0	0.555570233019602\\
0	0.707106781186548\\
0	0.831469612302545\\
0	0.923879532511287\\
0	0.98078528040323\\
0	1\\
0.195090322016128	0\\
0.38268343236509	-1\\
0.38268343236509	0\\
0.38268343236509	1\\
0.555570233019602	0\\
0.707106781186548	-1\\
0.707106781186548	-0.707106781186548\\
0.707106781186548	0\\
0.707106781186548	0.707106781186548\\
0.707106781186548	1\\
0.831469612302545	0\\
0.923879532511287	-1\\
0.923879532511287	0\\
0.923879532511287	1\\
0.98078528040323	0\\
1	-1\\
1	-0.923879532511287\\
1	-0.707106781186548\\
1	-0.38268343236509\\
1	0\\
1	0.38268343236509\\
1	0.707106781186548\\
1	0.923879532511287\\
1	1\\
};
\end{axis}
\end{tikzpicture}%
   \qquad
\begin{tikzpicture}

\begin{axis}[%
width=\dotfigurewidth,
height=\dotfigureheight,
ticks=none,
scale only axis,
separate axis lines,
xmin=-1,
xmax=1,
ymin=-1,
ymax=1
]
\addplot [only marks, mark=*, mark size=\dotsize, color=dotcolor,forget plot]
  table[row sep=crcr]{-1	-1\\
-1	-0.995184726672197\\
-1	-0.98078528040323\\
-1	-0.956940335732209\\
-1	-0.923879532511287\\
-1	-0.881921264348355\\
-1	-0.831469612302545\\
-1	-0.773010453362737\\
-1	-0.707106781186548\\
-1	-0.634393284163645\\
-1	-0.555570233019602\\
-1	-0.471396736825998\\
-1	-0.38268343236509\\
-1	-0.290284677254462\\
-1	-0.195090322016128\\
-1	-0.098017140329561\\
-1	0\\
-1	0.098017140329561\\
-1	0.195090322016128\\
-1	0.290284677254462\\
-1	0.38268343236509\\
-1	0.471396736825998\\
-1	0.555570233019602\\
-1	0.634393284163646\\
-1	0.707106781186548\\
-1	0.773010453362737\\
-1	0.831469612302545\\
-1	0.881921264348355\\
-1	0.923879532511287\\
-1	0.956940335732209\\
-1	0.98078528040323\\
-1	0.995184726672197\\
-1	1\\
-0.998795456205172	0\\
-0.995184726672197	-1\\
-0.995184726672197	0\\
-0.995184726672197	1\\
-0.989176509964781	0\\
-0.98078528040323	-1\\
-0.98078528040323	-0.707106781186548\\
-0.98078528040323	0\\
-0.98078528040323	0.707106781186548\\
-0.98078528040323	1\\
-0.970031253194544	0\\
-0.956940335732209	-1\\
-0.956940335732209	0\\
-0.956940335732209	1\\
-0.941544065183021	0\\
-0.923879532511287	-1\\
-0.923879532511287	-0.923879532511287\\
-0.923879532511287	-0.707106781186548\\
-0.923879532511287	-0.38268343236509\\
-0.923879532511287	0\\
-0.923879532511287	0.38268343236509\\
-0.923879532511287	0.707106781186548\\
-0.923879532511287	0.923879532511287\\
-0.923879532511287	1\\
-0.903989293123443	0\\
-0.881921264348355	-1\\
-0.881921264348355	0\\
-0.881921264348355	1\\
-0.857728610000272	0\\
-0.831469612302545	-1\\
-0.831469612302545	-0.707106781186548\\
-0.831469612302545	0\\
-0.831469612302545	0.707106781186548\\
-0.831469612302545	1\\
-0.803207531480645	0\\
-0.773010453362737	-1\\
-0.773010453362737	0\\
-0.773010453362737	1\\
-0.740951125354959	0\\
-0.707106781186548	-1\\
-0.707106781186548	-0.98078528040323\\
-0.707106781186548	-0.923879532511287\\
-0.707106781186548	-0.831469612302545\\
-0.707106781186548	-0.707106781186548\\
-0.707106781186548	-0.555570233019602\\
-0.707106781186548	-0.38268343236509\\
-0.707106781186548	-0.195090322016128\\
-0.707106781186548	0\\
-0.707106781186548	0.195090322016128\\
-0.707106781186548	0.38268343236509\\
-0.707106781186548	0.555570233019602\\
-0.707106781186548	0.707106781186548\\
-0.707106781186548	0.831469612302545\\
-0.707106781186548	0.923879532511287\\
-0.707106781186548	0.98078528040323\\
-0.707106781186548	1\\
-0.671558954847019	0\\
-0.634393284163645	-1\\
-0.634393284163645	0\\
-0.634393284163645	1\\
-0.595699304492433	0\\
-0.555570233019602	-1\\
-0.555570233019602	-0.707106781186548\\
-0.555570233019602	0\\
-0.555570233019602	0.707106781186548\\
-0.555570233019602	1\\
-0.514102744193222	0\\
-0.471396736825998	-1\\
-0.471396736825998	0\\
-0.471396736825998	1\\
-0.427555093430282	0\\
-0.38268343236509	-1\\
-0.38268343236509	-0.923879532511287\\
-0.38268343236509	-0.707106781186548\\
-0.38268343236509	-0.38268343236509\\
-0.38268343236509	0\\
-0.38268343236509	0.38268343236509\\
-0.38268343236509	0.707106781186548\\
-0.38268343236509	0.923879532511287\\
-0.38268343236509	1\\
-0.33688985339222	0\\
-0.290284677254462	-1\\
-0.290284677254462	0\\
-0.290284677254462	1\\
-0.242980179903264	0\\
-0.195090322016128	-1\\
-0.195090322016128	-0.707106781186548\\
-0.195090322016128	0\\
-0.195090322016128	0.707106781186548\\
-0.195090322016128	1\\
-0.146730474455362	0\\
-0.098017140329561	-1\\
-0.098017140329561	0\\
-0.098017140329561	1\\
-0.049067674327418	0\\
0	-1\\
0	-0.998795456205172\\
0	-0.995184726672197\\
0	-0.989176509964781\\
0	-0.98078528040323\\
0	-0.970031253194544\\
0	-0.956940335732209\\
0	-0.941544065183021\\
0	-0.923879532511287\\
0	-0.903989293123443\\
0	-0.881921264348355\\
0	-0.857728610000272\\
0	-0.831469612302545\\
0	-0.803207531480645\\
0	-0.773010453362737\\
0	-0.740951125354959\\
0	-0.707106781186548\\
0	-0.671558954847019\\
0	-0.634393284163645\\
0	-0.595699304492433\\
0	-0.555570233019602\\
0	-0.514102744193222\\
0	-0.471396736825998\\
0	-0.427555093430282\\
0	-0.38268343236509\\
0	-0.33688985339222\\
0	-0.290284677254462\\
0	-0.242980179903264\\
0	-0.195090322016128\\
0	-0.146730474455362\\
0	-0.098017140329561\\
0	-0.049067674327418\\
0	0\\
0	0.049067674327418\\
0	0.098017140329561\\
0	0.146730474455362\\
0	0.195090322016128\\
0	0.242980179903264\\
0	0.290284677254462\\
0	0.33688985339222\\
0	0.38268343236509\\
0	0.427555093430282\\
0	0.471396736825998\\
0	0.514102744193222\\
0	0.555570233019602\\
0	0.595699304492434\\
0	0.634393284163646\\
0	0.671558954847019\\
0	0.707106781186548\\
0	0.740951125354959\\
0	0.773010453362737\\
0	0.803207531480645\\
0	0.831469612302545\\
0	0.857728610000272\\
0	0.881921264348355\\
0	0.903989293123443\\
0	0.923879532511287\\
0	0.941544065183021\\
0	0.956940335732209\\
0	0.970031253194544\\
0	0.98078528040323\\
0	0.989176509964781\\
0	0.995184726672197\\
0	0.998795456205172\\
0	1\\
0.049067674327418	0\\
0.098017140329561	-1\\
0.098017140329561	0\\
0.098017140329561	1\\
0.146730474455362	0\\
0.195090322016128	-1\\
0.195090322016128	-0.707106781186548\\
0.195090322016128	0\\
0.195090322016128	0.707106781186548\\
0.195090322016128	1\\
0.242980179903264	0\\
0.290284677254462	-1\\
0.290284677254462	0\\
0.290284677254462	1\\
0.33688985339222	0\\
0.38268343236509	-1\\
0.38268343236509	-0.923879532511287\\
0.38268343236509	-0.707106781186548\\
0.38268343236509	-0.38268343236509\\
0.38268343236509	0\\
0.38268343236509	0.38268343236509\\
0.38268343236509	0.707106781186548\\
0.38268343236509	0.923879532511287\\
0.38268343236509	1\\
0.427555093430282	0\\
0.471396736825998	-1\\
0.471396736825998	0\\
0.471396736825998	1\\
0.514102744193222	0\\
0.555570233019602	-1\\
0.555570233019602	-0.707106781186548\\
0.555570233019602	0\\
0.555570233019602	0.707106781186548\\
0.555570233019602	1\\
0.595699304492434	0\\
0.634393284163646	-1\\
0.634393284163646	0\\
0.634393284163646	1\\
0.671558954847019	0\\
0.707106781186548	-1\\
0.707106781186548	-0.98078528040323\\
0.707106781186548	-0.923879532511287\\
0.707106781186548	-0.831469612302545\\
0.707106781186548	-0.707106781186548\\
0.707106781186548	-0.555570233019602\\
0.707106781186548	-0.38268343236509\\
0.707106781186548	-0.195090322016128\\
0.707106781186548	0\\
0.707106781186548	0.195090322016128\\
0.707106781186548	0.38268343236509\\
0.707106781186548	0.555570233019602\\
0.707106781186548	0.707106781186548\\
0.707106781186548	0.831469612302545\\
0.707106781186548	0.923879532511287\\
0.707106781186548	0.98078528040323\\
0.707106781186548	1\\
0.740951125354959	0\\
0.773010453362737	-1\\
0.773010453362737	0\\
0.773010453362737	1\\
0.803207531480645	0\\
0.831469612302545	-1\\
0.831469612302545	-0.707106781186548\\
0.831469612302545	0\\
0.831469612302545	0.707106781186548\\
0.831469612302545	1\\
0.857728610000272	0\\
0.881921264348355	-1\\
0.881921264348355	0\\
0.881921264348355	1\\
0.903989293123443	0\\
0.923879532511287	-1\\
0.923879532511287	-0.923879532511287\\
0.923879532511287	-0.707106781186548\\
0.923879532511287	-0.38268343236509\\
0.923879532511287	0\\
0.923879532511287	0.38268343236509\\
0.923879532511287	0.707106781186548\\
0.923879532511287	0.923879532511287\\
0.923879532511287	1\\
0.941544065183021	0\\
0.956940335732209	-1\\
0.956940335732209	0\\
0.956940335732209	1\\
0.970031253194544	0\\
0.98078528040323	-1\\
0.98078528040323	-0.707106781186548\\
0.98078528040323	0\\
0.98078528040323	0.707106781186548\\
0.98078528040323	1\\
0.989176509964781	0\\
0.995184726672197	-1\\
0.995184726672197	0\\
0.995184726672197	1\\
0.998795456205172	0\\
1	-1\\
1	-0.995184726672197\\
1	-0.98078528040323\\
1	-0.956940335732209\\
1	-0.923879532511287\\
1	-0.881921264348355\\
1	-0.831469612302545\\
1	-0.773010453362737\\
1	-0.707106781186548\\
1	-0.634393284163645\\
1	-0.555570233019602\\
1	-0.471396736825998\\
1	-0.38268343236509\\
1	-0.290284677254462\\
1	-0.195090322016128\\
1	-0.098017140329561\\
1	0\\
1	0.098017140329561\\
1	0.195090322016128\\
1	0.290284677254462\\
1	0.38268343236509\\
1	0.471396736825998\\
1	0.555570233019602\\
1	0.634393284163646\\
1	0.707106781186548\\
1	0.773010453362737\\
1	0.831469612302545\\
1	0.881921264348355\\
1	0.923879532511287\\
1	0.956940335732209\\
1	0.98078528040323\\
1	0.995184726672197\\
1	1\\
};
\end{axis}
\end{tikzpicture}%
   \caption{CGL sparse grid in ${[0, 1]}^2$, $q=6$ and
    $q=8$}\label{fig_sparsegrid_CGL}
\end{figure}
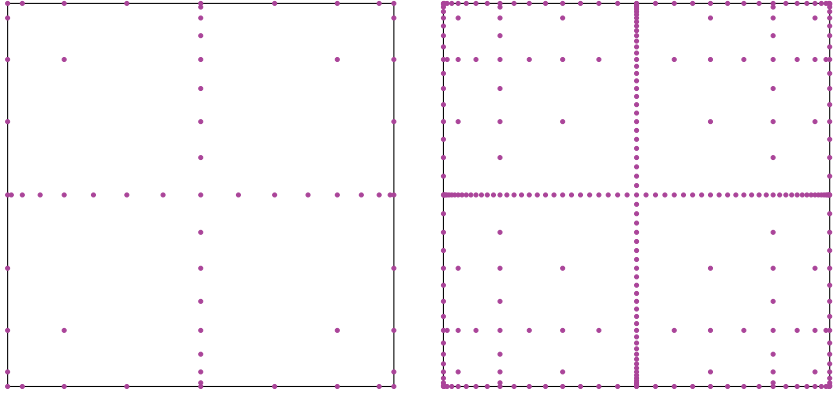

\subsection{Interpolation on sparse grids}
Given a problem of optimal control, its solution on a sparse grid can be
computed off-line. Then in real-time feedback control, the value of minimum
cost, $V(t,x)$, and the costate, $\vec{\lambda}(t)$, can be computed using
interpolation on the sparse grid.
For simplicity, this section will focus on interpolating $V(t,x)$.
Consider $X^i\subseteq [0, 1]$, $i\geq 1$. A basis function,
$a^i_{\tilde x}(x)$, for a point $\tilde x\in X^i$ is defined on
$[0,1]$ satisfying
\begin{equation*}
  a^i_{\tilde x}(x)=
  \begin{dcases}
    1, & x = \tilde x; \\
    0, & x\in X^i, x\neq \tilde x.
  \end{dcases}
\end{equation*}
For $\dltx^i_j\in \Delta X^i\subseteq X^j$, we define a simplified notation
\begin{equation*}
  a^i_j(x)=a^i_{\dltx^i_j}(x).
\end{equation*}
Figure~\ref{fig_base} shows a few basis functions for the various sparse grids.
\begin{figure}
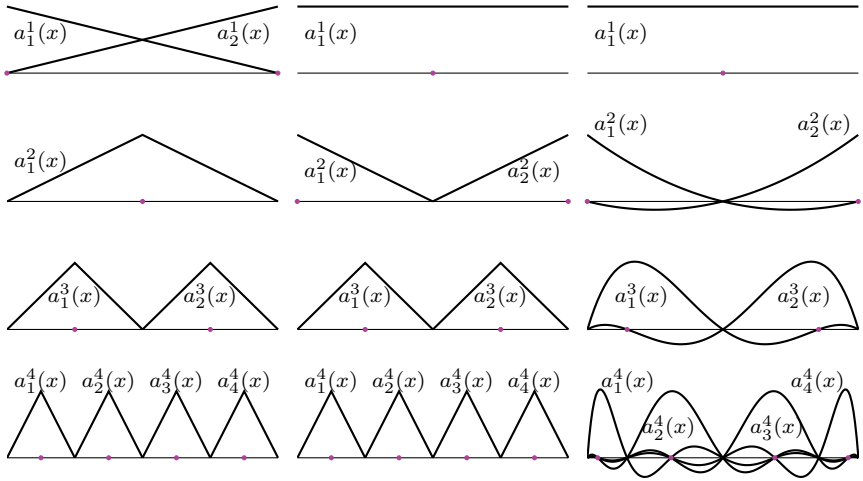

  \centering
%
   \caption{Basis functions for the classic, modified, and CGL sparse
    grids}\label{fig_base}
\end{figure}

The classic sparse grid uses piecewise linear basis functions
\begin{subequations}\label{base_classic}
  \begin{align}
    a^1_1(x)&=x,\\
    a^1_2(x)&=1-x,\\
    a^i_j(x)&=
    \begin{dcases}
      1-\abs{2^{i-1}x-(2j-1)}, & x\in \left[\frac{2(j-1)}{2^{i-1}},
                                 \frac{2j}{2^{i-1}}\right]; \\
      0,                       & \mbox{otherwise};
    \end{dcases}\label{base_classic_gen}
  \end{align}
\end{subequations}
for $i \geq 2$ and $1\leq j\leq 2^{i-2}$.
For the modified sparse grid, $a^i_j(x)$ is the same as in
identity~\eqref{base_classic_gen} if $i\geq 3$ and otherwise
\begin{subequations}\label{base_modified}
  \begin{align}
    a^1_1(x)&=1,\\
    a^2_1(x)&=
      \begin{dcases}
        1-2x, & x\in \left[0, \frac{1}{2}\right];\\
           0, & x\in \left(\frac{1}{2}, 1\right];
      \end{dcases}\\
    a^2_2(x)&=
      \begin{dcases}
           0, & x\in \left[0, \frac{1}{2}\right];\\
        2x-1, & x\in \left(\frac{1}{2}, 1\right].
      \end{dcases}
  \end{align}
\end{subequations}
For the CGL sparse grid, the Lagrange polynomials form the basis functions
\begin{equation*}
  a^i_j(x)=
    \prod_{\substack{x^i\in X^i \\ x^i \ne x^i_j}}\frac{x-x^i}{x^i_j-x^i},
\end{equation*}
for $i > 1$ and $1\leq j\leq 2^{i-2}$.

In the following, the inequality
$\bj \leq \dltN_{\bi}$
implies
\begin{align*}
        j_1\leq \dltN_{i_1},
  \quad j_2\leq \dltN_{i_2},
  \quad \ldots,
  \quad j_d\leq \dltN_{i_d}.
\end{align*}
The inequality $\bk \leq N_{\bi}$ is similarly defined.
The interpolation on a sparse grid does not need every basis function. In fact,
for each $i > 1$, an interpolation function uses only those $a^i_{\tilde x}$
for which $\tilde x\in \Delta X^i$. Let $\mathcal{I}^q(f)$ be the interpolation of $f$ at
grid points of $\Gsp^q$. It is defined recursively on ${[0, 1]}^d$ as
\begin{subequations}\label{interpo}
  \begin{align}
    \mathcal{I}^{d-1}(f)&=0,\\
    \mathcal{I}^q(f)    &= \mathcal{I}^{q-1}+\Delta \mathcal{I}^q (f), & q\geq d,\\
    \Delta \mathcal{I}^q(f) &= \sum_{\abs{\bi}=q}\sum_{1\leq \bj\leq \dltN_{\bi}}
                    w^{\bi}_{\bj}a^{i_1}_{j_1}\otimes\cdots \otimes
                    a^{i_d}_{j_d}, & q\geq d,\\
                    w^{\bi}_{\bj} &=f(x^{\bi}_{\bj})- \mathcal{I}^{q-1}(f)(x^{\bi}_{\bj}),
  \end{align}
where
  \begin{equation*}
    a^{i_1}_{j_1}\otimes\cdots \otimes a^{i_d}_{j_d}(x_1,\ldots,x_d)=
        a^{i_1}_{j_1}(x_1)\cdots  a^{i_d}_{j_d}(x_d).
  \end{equation*}
\end{subequations}
The weights, $w^{\bi}_{\bj}$, are scalars called \emph{hierarchical surpluses}.

An alternative formulation uses basis functions $a^i_{\tilde x}(x)$ for
$\tilde x \in X^i$.
The corresponding basis functions are denoted by
\begin{equation*}
  u^i_k(x)=a^i_{x^i_k}(x),
\end{equation*}
for $x^i_k\in  X^i$.  Note that the subindex in this notation, $k$, represents
the index in $X^i$, not in $\Delta X^i$.  Again, we use piecewise linear basis
functions for the classic and modified sparse grids as in the
definitions~\eqref{base_classic} and~\eqref{base_modified}.  Similarly, we use
Lagrange polynomials as the basis functions for the CGL sparse grid.
For the $X^i$ defined in the equations~\eqref{classicalX}--\eqref{CGL}, we have
the relation
\begin{equation*}
  a^i_j(x)=u^i_{2j-1}(x),
\end{equation*}
for $1\le j \le \dltN_{i_d}$.
The interpolations in $[0,1]$ and ${[0,1]}^d$ are
\begin{align*}
  {\mathcal{U}}^{i} (f) &= \sum_{k=1}^{N_i} f(x^i_k) u^i_k,\\
    {\mathcal{U}}^{i_1}\otimes {\mathcal{U}}^{i_2}\otimes \cdots \otimes
  {\mathcal{U}}^{i_d} (f) &= \sum_{1\leq \bk \leq N_{\bi}}
    f(x^{\bi}_{\bk})u^{i_1}_{k_1}\otimes u^{i_2}_{k_2}\otimes \cdots \otimes
    u^{i_d}_{k_d}.
\end{align*}
Let $\mathcal{I}^q(f)$ be the interpolation on sparse grids. Following the
formulation in \citet{barthelmann}, \citet{delvos}, and \citet{wasilkowski},
the interpolation on the sparse grid, $\mathcal{I}^q(f)$, in terms of the new
basis is
\begin{equation}\label{interpolation2}
  \begin{split}
    \mathcal{I}^q(f)&=\sum_{q-d+1\leq \abs{\bi}\leq q} {(-1)}^{q-\abs{\bi}}
                      \begin{pmatrix} d-1\\ q-\abs{\bi}\end{pmatrix}
                      {\mathcal{U}}^{i_1}\otimes {\mathcal{U}}^{i_2}\otimes
                      \cdots \otimes {\mathcal{U}}^{i_d} (f)\\
                    &=\sum_{q-d+1\leq \abs{\bi}\leq q}
                      \sum_{1\leq \bk \leq N_{\bi}}
                      {(-1)}^{q-\abs{\bi}}
                      \begin{pmatrix} d-1\\ q-\abs{\bi}\end{pmatrix}
                      f(x^{\bi}_{\bk}) u^{i_1}_{k_1}\otimes
                      u^{i_2}_{k_2}\otimes \cdots
                      \otimes u^{i_d}_{k_d}.
  \end{split}
\end{equation}

Before the next section on error analysis, the algorithm is summarized as
follows. The off-line computation has three steps.
\begin{description}[leftmargin=8em,labelindent=3em,style=nextline,itemsep=1ex]
  \item[{\bf Step I}] Generating a sparse grid, $\Gsp^q$, and its basis functions
          $\Set*{a^{i}_{j}}_{i=1,j=1}^{q-d+1,\dltN_i}$.
  \item[{\bf Step II}]  Solve the two-point BVP defined in
    equation~\eqref{eq:hamchar} at each grid point.
  \item[{\bf Step III}] Generating the hierarchical surpluses, $\{ w^i_j\}$,
    using \eqref{interpo} or other equivalent formulae.
\end{description}
The output of the process consists of the values of $\vec{\lambda}(t)$ and
$V(t,x)$ at grid points in $\Gsp^q$. For on-line optimal feedback control, the
value of these functions at an arbitrary $x$ is approximated using interpolation
on the sparse grid. Then the control input \eqref{optuv} is computed for
real-time control and operation. For a feedback using model predictive control,
like the examples in Section~\ref{sec_examples}, the feedback requires the value
of $\vec{\lambda}(t)$ at $t=0$ in each time interval. It is not necessary to
solve the problem for $t\neq 0$. In the case of interpolation at $t\geq 0$,
which is not addressed in this paper, the time variable can be included in the
sparse grid as another dimension.

\section{Error Analysis}
\label{sec_errorbounds}
Let $V^{\bi}_{\bj}$ represent the value of $V(t,x)$ evaluated at
$x=x^{\bi}_{\bj}$, i.e.,
\begin{equation*}
  V^{\bi}_{\bj} = V(t,x^{\bi}_{\bj}).
\end{equation*}
A causality free algorithm, such as the numerical solution of the characteristic
equations \eqref{eq:hamchar}--\eqref{optuv}, approximates
$V^{\bi}_{\bj}$ with an error
\begin{equation}
\label{eq_ebvp}
\bar V^{\bi}_{\bj} = V^{\bi}_{\bj}+\epsilon^{\bi}_{\bj}.
\end{equation}
At an arbitrary point $x$, the approximation based on sparse grid interpolation
is
\begin{equation*}
  \mathcal{I}^q(\bar V)(x)=\mathcal{I}^q(V)(x)+\ebvp,
\end{equation*}
where $\ebvp$ is the error due to $\epsilon^{\bi}_{\bj}$, the numerical error of
the solution of the BVP~\eqref{eq:hamchar}--\eqref{optuv}. More specifically,
\begin{equation}\label{ebvp_explicit}
\ebvp=\sum_{q-d+1\leq \abs{\bi}\leq q} \sum_{1\leq \bk \leq N_{\bi}}
{(-1)}^{q-\abs{\bi}}\begin{pmatrix} d-1\\ q-\abs{\bi}\end{pmatrix}
\epsilon^{\bi}_{\bk}u^{i_1}_{k_1}\otimes u^{i_2}_{k_2}\otimes \cdots
\otimes u^{i_d}_{k_d}.
\end{equation}
Relative to the true value, the interpolation process has an error
\begin{equation*}
  \einterp=\mathcal{I}^q(V)(x)-V(t,x).
\end{equation*}
Therefore,
\begin{equation}\label{eq_error}
\mathcal{I}^q(\bar V)(x)=V(t,x)+\einterp+\ebvp.
\end{equation}

\subsection{Error upper bounds}
For a survey of sparse grids and error estimation, the reader is referred to
\citet{bungartz} and \citet{garcke}. In the case of a classic sparse grid,
applying a piecewise linear interpolation to functions with continuous second
order partial derivatives, $\einterp$ is at the order of
\begin{equation}\label{eq_rate1}
  \norm{\einterp}=O\left(\frac{{(\log N)}^{d-1}}{N^2}\right)
\end{equation}
which holds both for the $L^2$- and $L^\infty$-norm.

The estimate in identity~\eqref{eq_rate1} holds for both $\norm{\einterp}_{L^2}$
and $\norm{\einterp}_{L^\infty}$. In the case of a CGL sparse grid, an error
upper bound of $\einterp$ is proved by \citet{barthelmann}. Suppose a function
$f$ has $k$th order continuous partial derivatives. Define the norm
\[
  \norm{f}_{W^{k,\infty}}=\max\Set*{\norm{
      \frac{\partial^{\abs{\bi}}}{\partial x_1^{i_1}\cdots\partial x_d^{i_d}}
      f}_{L^\infty} \given 1\leq i_1,\ldots,i_d\leq k},
\]
then the interpolation of $f$ on a CGL sparse grid satisfies
\[
  \norm{\einterp}_{W^{k,\infty}} =
  O\left(\frac{{(\log M)}^{(k+2)(d-1)+1}}{M^k}\right),
\]
where $M$ is the number of sparse grid points.

In this section, we prove an upper bound for $\ebvp$. Its value is small if the
dimension, $d$, is one or two. However, the error becomes larger when solving
PDEs with a higher dimension. In our examples of $d=6$ and $\ebvp$ is not
negligible.
Let's define
\[\Lambda_i=\max_x \sum_{k=1}^{N_i} \abs{u^i_k(x)}\]
for $i\ge1$.  For polynomial interpolations, this number is the Lebesgue
constant. For any integer $l\geq d$, we define
\[S_l=\sum_{\abs{\bi}=l}\Lambda_{i_1}\Lambda_{i_2}\cdots\Lambda_{i_d}.\]
\begin{theorem}\label{thm1}
(i) Suppose $\epsilon > 0$ is an upper bound of the numerical error at each
grid point, i.e., $\abs{\epsilon^{\bi}_{\bj}} $ in equation~\eqref{eq_ebvp} are
smaller than $\epsilon$. Then
\begin{equation}\label{eq_ebvpbound}
  \norm{\ebvp}_{L^\infty} < \epsilon \sum_{l=q-d+1}^q
\begin{pmatrix} d-1\\ q-l\end{pmatrix} S_{l}
\end{equation}
(ii) Suppose $\tilde \Lambda_q>0$ is a constant such that
\[\Lambda_i\leq \tilde \Lambda_q \qquad (1\leq i\leq q-d+1).\]
Then
\begin{equation}\label{eq_order1}
  \frac{\norm{\ebvp}_{L^\infty}}{\epsilon}=
    O\left({(\log N)}^{d-1} \tilde \Lambda_q^d\right)
\end{equation}
where $N+1=2^{q-d}+1$ is the number of grid points in each dimension.
\end{theorem}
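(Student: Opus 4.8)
The plan is to work directly from the explicit representation~\eqref{ebvp_explicit} of $\ebvp$ and to reduce everything to products of the one-dimensional Lebesgue constants $\Lambda_i$. For part~(i) I would first pass to absolute values pointwise. Applying the triangle inequality to~\eqref{ebvp_explicit} and inserting the hypothesis $\abs{\epsilon^{\bi}_{\bk}} < \epsilon$ gives, for every $x$,
\begin{equation*}
  \abs{\ebvp(x)} < \epsilon \sum_{q-d+1\leq \abs{\bi}\leq q}
    \binom{d-1}{q-\abs{\bi}}
    \sum_{1\leq \bk \leq N_{\bi}}
    \abs{u^{i_1}_{k_1}(x_1)}\cdots\abs{u^{i_d}_{k_d}(x_d)}.
\end{equation*}
The crucial observation is that the inner tensor-product sum factors across dimensions,
\begin{equation*}
  \sum_{1\leq \bk \leq N_{\bi}}
    \abs{u^{i_1}_{k_1}(x_1)}\cdots\abs{u^{i_d}_{k_d}(x_d)}
  = \prod_{r=1}^d \sum_{k_r=1}^{N_{i_r}} \abs{u^{i_r}_{k_r}(x_r)}
  \leq \prod_{r=1}^d \Lambda_{i_r},
\end{equation*}
where each factor is bounded by the corresponding $\Lambda_{i_r}$ by the very definition of the Lebesgue constant. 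Regrouping the outer sum according to the level $\abs{\bi}=l$ and invoking the definition of $S_l$ then yields the bound~\eqref{eq_ebvpbound} uniformly in $x$, and hence in the $L^\infty$-norm.

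For part~(ii) I would estimate the growth of the right-hand side of~\eqref{eq_ebvpbound} under the uniform hypothesis $\Lambda_i \leq \tilde{\Lambda}_q$. Every index $i_r$ occurring in $S_l$ satisfies $1\leq i_r\leq q-d+1$ (the largest a single entry can be is attained when the remaining $d-1$ entries equal $1$), so the hypothesis applies to each factor and $\Lambda_{i_1}\cdots\Lambda_{i_d}\leq \tilde{\Lambda}_q^d$. Consequently $S_l \leq \tilde{\Lambda}_q^d$ times the number of multi-indices with positive entries and $\abs{\bi}=l$. That count is the number of compositions of $l$ into $d$ positive parts, namely $\binom{l-1}{d-1}$, so $S_l \leq \tilde{\Lambda}_q^d\binom{l-1}{d-1}$. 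The outer sum in~\eqref{eq_ebvpbound} has only $d$ terms $(l=q-d+1,\dots,q)$; for each of them $\binom{l-1}{d-1}=O(q^{d-1})$ with $d$ fixed, while the prefactor $\binom{d-1}{q-l}\leq 2^{d-1}$ is bounded. Hence the full sum is $O(\tilde{\Lambda}_q^d\,q^{d-1})$.

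It remains to convert the polynomial factor in $q$ into one in $\log N$. From the stated relation $N=2^{q-d}$ we obtain $q=d+\log_2 N$, so $q^{d-1}=O\bigl({(\log N)}^{d-1}\bigr)$, and combining with part~(i) delivers the asserted order~\eqref{eq_order1}. The genuinely delicate part of the argument is the multi-index bookkeeping: recognizing that the inner tensor sum splits into exactly $d$ independent one-dimensional Lebesgue sums, and that the number of positive compositions of $l$ into $d$ parts is precisely $\binom{l-1}{d-1}$. Once that combinatorial accounting is correct, the triangle inequality, the boundedness of the binomial prefactor, and the logarithmic substitution are all elementary.
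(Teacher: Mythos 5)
Your proposal is correct and follows essentially the same route as the paper's proof: part~(i) factors the tensor-product sum into one-dimensional Lebesgue sums bounded by $\Lambda_{i_r}$ and regroups by level to invoke $S_l$, and part~(ii) counts the compositions of $l$ into $d$ positive parts as $\binom{l-1}{d-1}=O(q^{d-1})$, bounds the binomial prefactors by $2^{d-1}$, and substitutes $q=d+\log_2 N$. Your explicit check that every index $i_r$ appearing in $S_l$ satisfies $i_r\leq q-d+1$ (so the hypothesis on $\tilde\Lambda_q$ applies to each factor) is a detail the paper leaves implicit, but it does not change the argument.
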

\begin{proof}
(i)
It is easy to check that
\begin{align*}
  \sum_{\abs{\bi}=l}\sum_{1\leq \bk \leq N_{\bi}}
  \abs{u^{i_1}_{k_1}\otimes u^{i_2}_{k_2}\otimes \cdots \otimes u^{i_d}_{k_d}}
  &=\sum_{\abs{\bi}=l}\sum_{k_1=1}^{N_1} \cdots \sum_{k_d=1}^{N_d}
  \abs{u^{i_1}_{k_1}\otimes u^{i_2}_{k_2}\otimes \cdots \otimes u^{i_d}_{k_d}}\\
  &=\sum_{\abs{\bi}=l}\left(\sum_{k_1=1}^{N_1} \abs{u^{i_1}_{k_1}}\right)\otimes \cdots
  \otimes \left(\sum_{k_d=1}^{N_d} \abs{u^{i_d}_{k_d}}\right)\\
  &\leq \sum_{\abs{\bi}=l} \Lambda_{i_1}\Lambda_{i_2}\cdots\Lambda_{i_d}
\end{align*}
for all $x\in {[0, 1]}^d$. Therefore, given any integer $l\geq d$ we have
\begin{equation}\label{eq_ineqSq}
\sum_{\abs{\bi}=l}\sum_{1\leq \bk \leq N_{\bi}}
\abs{u^{i_1}_{k_1}\otimes u^{i_2}_{k_2}\otimes \cdots
\otimes u^{i_d}_{k_d} (x)} \leq S_l.
\end{equation}
Then the upper bound \eqref{eq_ebvpbound} is a corollary of
equations~\eqref{interpolation2}, \eqref{ebvp_explicit}, and \eqref{eq_ineqSq}.

(ii)
To find the order of $\norm{\ebvp}/\epsilon$, we first note
\begin{equation*}
\begin{split}
  S_l &=    \sum_{\abs{\bi}=l}\Lambda_{i_1}\Lambda_{i_2}\cdots\Lambda_{i_d}\\
      &\leq \tilde \Lambda_q^d \sum_{\abs{\bi}=l} 1\\
      &=    \begin{pmatrix} l-1\\ l-d\end{pmatrix} \tilde \Lambda_q^d\\
      &=    \frac{(l-1)(l-2)\cdots (l-d)\cdots d}{(l-d)!} \tilde \Lambda_q^d\\
      &\leq {(l-1)}^{d-1} \tilde \Lambda_q^d\\
      &\leq {(q-1)}^{d-1} \tilde \Lambda_q^d.
\end{split}
\end{equation*}
From identity~\eqref{eq_ebvpbound},
\begin{equation*}
\begin{split}
  \frac{\norm{\ebvp}_{L^\infty}}{\epsilon}
  &<    \sum_{l=q-d+1}^q \begin{pmatrix} d-1\\ q-l\end{pmatrix} S_{l}\\
  &\leq {(q-1)}^{d-1}  \tilde \Lambda_q^d \sum_{l=q-d+1}^q
        \begin{pmatrix} d-1\\ q-l\end{pmatrix} \\
  &=    2^{d-1}{(q-1)}^{d-1}  \tilde \Lambda_q^d.
\end{split}
\end{equation*}
Then the relation~\eqref{eq_order1} follows from the fact
\[
q=d+\log_2 N= O(\log N).
\]
\qed{}
\end{proof}

\begin{corollary}\label{corollary}
For the classic or the modified sparse grid and piecewise linear
interpolation,
\begin{equation}\label{eq_order2}
  \frac{\norm{\ebvp}_{L^\infty}}{\epsilon}=O\left({(\log N)}^{d-1} \right).
\end{equation}
For the CGL sparse grid and polynomial interpolation,
\begin{equation}\label{eq_order3}
  \frac{\norm{\ebvp}_{L^\infty}}{\epsilon}=O\left({(\log N)}^{2d-1} \right).
\end{equation}
\end{corollary}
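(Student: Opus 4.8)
The plan is to obtain both estimates directly from Theorem~\ref{thm1}(ii), which asserts that $\norm{\ebvp}_{L^\infty}/\epsilon = O((\log N)^{d-1}\tilde\Lambda_q^d)$ whenever $\tilde\Lambda_q$ dominates the one-dimensional Lebesgue constants $\Lambda_i$ for $1\leq i\leq q-d+1$. Thus the entire task reduces to exhibiting, for each grid family, an admissible value of $\tilde\Lambda_q$ and substituting it into \eqref{eq_order1}. The two cases differ only in how large the $\Lambda_i$ are.

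First I would handle the classic and modified sparse grids. Here the one-dimensional basis functions $u^i_k$ are the piecewise linear hat functions, which are nonnegative and reproduce constants, so they form a partition of unity on $[0,1]$. (For the modified grid the nodes of $X^i$ coincide with those of the classic grid for $i\geq 2$, and the level $i=1$ uses the constant basis $u^1_1\equiv 1$, so the property holds at every level.) Consequently, for every $x$, $\sum_{k=1}^{N_i}\abs{u^i_k(x)} = \sum_{k=1}^{N_i} u^i_k(x) = 1$, whence $\Lambda_i = 1$ for all $i$. Taking $\tilde\Lambda_q = 1$ gives $\tilde\Lambda_q^d = 1$, and \eqref{eq_order1} collapses to \eqref{eq_order2} with no further work.

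Next I would treat the CGL grid, where the $u^i_k$ are the Lagrange polynomials interpolating at the $N_i = 2^{i-1}+1$ Chebyshev--Gauss--Lobatto nodes of $X^i$. For this node distribution the Lebesgue constant is known to grow only logarithmically in the number of nodes, $\Lambda_i = O(\log N_i)$; since $N_i = 2^{i-1}+1$ this is $\Lambda_i = O(i)$. The maximum over $1\leq i\leq q-d+1$ is therefore attained at the finest level, giving $\tilde\Lambda_q = O(q-d+1) = O(q)$. Using $q = d + \log_2 N = O(\log N)$, we get $\tilde\Lambda_q = O(\log N)$ and hence $\tilde\Lambda_q^d = O((\log N)^d)$. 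Substituting into \eqref{eq_order1} yields $\norm{\ebvp}_{L^\infty}/\epsilon = O((\log N)^{d-1}(\log N)^d) = O((\log N)^{2d-1})$, which is \eqref{eq_order3}.

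The single nontrivial ingredient, and thus the main obstacle, is the logarithmic growth of the Chebyshev--Gauss--Lobatto Lebesgue constant: it is the one fact not produced by the elementary partition-of-unity argument. This is a classical result in the approximation theory of polynomial interpolation at Chebyshev nodes and can be quoted from the literature rather than reproved here; everything else is a direct specialization of part (ii) of the theorem.
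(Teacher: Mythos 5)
Your proposal is correct and follows essentially the same route as the paper: both proofs reduce the corollary to Theorem~\ref{thm1}(ii) by taking $\tilde\Lambda_q=1$ for the piecewise linear bases (the paper states this without the partition-of-unity justification you supply) and $\tilde\Lambda_q=O(\log N)$ for the CGL grid, where the paper quotes the explicit Lebesgue-constant bound $\tilde\Lambda_q \leq \frac{2}{\pi}\log N + \frac{2}{\pi}\bigl(\gamma+\log\frac{4}{\pi}\bigr)+\frac{2}{\pi}\log 2$ from the literature rather than your level-by-level estimate $\Lambda_i=O(i)$ maximized at the finest level. The substitution into \eqref{eq_order1} is then identical in both arguments.
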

\begin{proof}
The basis functions of the piecewise linear interpolation have the following
property
\[\tilde \Lambda_q =1.\]
The Lebesgue constant for the CGL grid points is bounded by (see
for example \citet{hesthaven})
\begin{equation}\label{lebesgue}
  \tilde \Lambda_q \leq \frac{2}{\pi}\log N+\frac{2}{\pi}\left(\gamma
  +\log\frac{4}{\pi}\right)+\frac{2}{\pi}\log 2
                 =     O\left(\log N\right)
\end{equation}
where $\gamma$ is Euler's constant.
Substitute $\tilde \Lambda_q$ into equation~\eqref{eq_order1} to yield
equations~\eqref{eq_order2} and \eqref{eq_order3}. \qed
\end{proof}

\subsection{Numerically estimate $\ebvp$}
Some numerical examples show that the upper bound in
inequality~\eqref{eq_ebvpbound} tends to be conservative. Using
relations~\eqref{eq_ebvpbound} and~\eqref{lebesgue}, we can find an error upper
bound for the CGL sparse grid $\Gsp^q$. For example, if $q=13$ we have
\begin{equation}\label{eq_upperbound_ad}
\frac{\norm{\ebvp}_{L^\infty}}{\epsilon} < \sum_{l=q-d+1}^q
\begin{pmatrix} d-1\\ q-l\end{pmatrix} S_{l} \leq 3.66 \times 10^4.
\end{equation}
In this estimation, $\ebvp=\epsilon^\bi_\bj$ at each grid point is assumed to be
the maximum value, $\epsilon$. It is a conservative assumption.  As an
alternative, we can assume $\epsilon^\bi_\bj$ is a random variable.
(This approach is commonly used in uncertainty quantification, rightly or
wrongly, to get a handle on the model error.)  We will do this here to get
another estimate of $\ebvp$.

From identity~\eqref{ebvp_explicit}, $\ebvp$ has two special
properties. Firstly, the value of $\ebvp$ is not directly dependent on $f(x)$.
The error is solely based on the type of grid, the interpolation basis
functions, and the distribution of $\epsilon^\bi_\bj$.  Therefore, the
estimation of $\ebvp$ can be done off-line for a given grid. The result is then
applicable to a family of problems. Secondly, $\ebvp$ is a linear function of
$\epsilon^\bi_\bj$. The estimate of $\ebvp$ for a given distribution of
$\epsilon^\bi_\bj$ is applicable to a different distribution of the same type
through a simple rescaling.

Given a sample set of $\epsilon^\bi_\bj$, $\ebvp$ can be computed using
equation~\eqref{ebvp_explicit} at any $x$ in ${[0, 1]}^d$. Finding the
probability distribution of $\epsilon^\bi_\bj$ is not a problem to be addressed
in this paper. As an example, we assume that $\epsilon^\bi_\bj$ are independent
random variables with uniform distribution in $[-\epsilon, \epsilon]$.  Let
$\Set*{\bar\epsilon^\bi_\bj}$ be a sample data set with a uniform distribution
in $[0,1]$. After rescaling, equation~\eqref{ebvp_explicit} implies
\begin{equation*}\label{ebvp_explicit2}
\frac{\ebvp}{\epsilon}=\sum_{q-d+1\leq \abs{\bi}\leq q}
\sum_{1\leq \bk \leq N_{\bi}} {(-1)}^{q-\abs{\bi}}
\begin{pmatrix} d-1\\ q-\abs{\bi}\end{pmatrix}\bar\epsilon^{\bi}_{\bk}u^{i_1}_{k_1}
  \otimes u^{i_2}_{k_2}\otimes \cdots \otimes u^{i_d}_{k_d}.
\end{equation*}
In an numerical example, a set of $\abs{\Gsp^q}=44,689$ random numbers are generated
as the sample value, $\bar \epsilon^\bi_\bj$, in $[0, 1]$. At $N=2,000$ random
points in ${[0,1]}^d$, we found
\begin{equation}\label{eq_66}
\frac{\ebvp}{\epsilon}\leq 66.60.
\end{equation}
The bound in \eqref{eq_66} is much smaller than the upper bound in
the inequality~\eqref{eq_upperbound_ad} derived from
Theorem~\ref{thm1}.
We would like to emphasize that this practical way of estimating
$\ebvp$ is independent of the function to be approximated. The overall
error $\einterp+\ebvp$ can be approximated in a similar way based on
the assumption that $\epsilon$, the error upper bound of BVP solver,
is known. This point is discussed in the next section and exemplified
in Example~I.  A thorough numerical analysis of errors is outside the
scope of this paper. By no means can the examples in this paper lead
to a general conclusion about the error upper bound.  However, the
examples in this paper present a practical approach of using Monte
Carlo simulations to analyze $\ebvp$ and $\einterp+\ebvp$.

\subsection{Numerically estimate $\einterp+\ebvp$}
\label{sec_totalerror}
What we ultimately care about is the total error in an approximate of
$V(t,x)$, which is $\einterp+\ebvp$ in equation~\eqref{eq_error}. For causality
free algorithms, such as solving the BVP~\eqref{eq:hamchar}--\eqref{optuv},
numerical errors do not propagate in space, i.e., the value of $V(t,x)$ can be
computed independently from the approximation error at other points. This
special property of causality free algorithms has an important implication. A
BVP solver with accurate error control can be used to approximate the error of a
numerical solution of the PDE.  There is a sizable literature of numerical BVP
algorithms. Some approaches are able to control the error within a given
tolerance. For the examples in this paper, we use a four-point Lobatto IIIa
formula which can be implemented with a controlled true error~\cite{kierzenka}.
Given a numerical solution on $\Gsp^q$,
\[\bar V^\bi_\bj\approx V(t, x^\bi_\bj).\]
The approximate of $V(t,x)$ at any point $x$ is obtained by interpolation
\[V(t,x)\approx \mathcal{I}^q(\bar V).\]
Meanwhile, a BVP solver with error control can be applied to solve the
equations~\eqref{eq:hamchar}--\eqref{optuv} at the same point. Suppose the
solution is $\tilde V(t,x)$.  The error tolerance is set so that its true error
is much smaller than $\einterp+\ebvp$. To find the distribution of
$\einterp+\ebvp$, a set of sample points is generated in the state space,
organized or random. The numerical error at these points can be approximated
using
\begin{equation*}\label{validate}
  \abs{\einterp+\ebvp}=\abs{\mathcal{I}^q(\bar V)(x)-V(t,x)}\approx
    \abs{\mathcal{I}^q(\bar V)(x)-\tilde V(t,x)}.
\end{equation*}
Although the computation of $\tilde V(t,x)$ can be slow, the entire process is
perfectly parallel because of its nature of being causality free. In the
examples below, 128 CPU cores are used to approximate the error at $1280$ random
points. The error tolerance of $\tilde V(t,x)$ is set at $10^{-7}$ or
$10^{-9}$. The numerical results show that this tolerance is smaller than
$\abs{\einterp+\ebvp}$ by at least three orders of magnitude.

\section{Examples}
\label{sec_examples}
In this section, two examples of optimal attitude control are presented. The
system model represents a rigid body controlled by momentum wheels. The first
example is a system with three pairs of controllable momentum wheels.
The second example is an uncontrollable system with two pairs of momentum
wheels. The uncontrollable case in itself is interesting. The numerical result
is fundamentally different from existing controllers in the literature.

Consider a rigid body system. Let $\Set*{e_1, e_2, e_3}$ be an inertial frame of
orthonormal vectors and let $\Set*{e_1', e_2', e_3'}$ be a body-fixed frame, or
body frame. In this paper, the attitude of a satellite is represented by Euler
angles (see \citet{diebel} for a good introduction to representing attitude)
\[v=\begin{bmatrix} \phi & \theta & \psi\end{bmatrix}^\mathsf{T}\]
in which $\phi$, $\theta$, and $\psi$ are the angles of rotation around $e_1'$,
$e_2'$, and $e_3'$, respectively, in the order of $(3,2,1)$. The angular
velocity is a vector in the body frame,
\[\omega = \begin{bmatrix} \omega_1&\omega_2&\omega_3\end{bmatrix}^\mathsf{T}.\]
The control system using momentum wheels is defined by a set of differential
equations~\cite{crouch}
\begin{subequations}\label{sat_model}
  \begin{align}
          \dot v &= E(v) \omega, \\
    J\dot \omega &= S(\omega)R(v)H+Bu,
  \end{align}
\end{subequations}
where $B\in \Re^{3\times m}$ is a constant matrix in which $m$ is the number of
control variables, $u\in\Re^m$ is the control torque, $J \in \Re^{3\times 3}$ is a
combination of inertia matrices of the rigid body without wheels and the
momentum wheels, $H\in \Re^3$ is the total and constant angular momentum of the
system, and $E(v), S(\omega), R(v) \in \Re^3$ are matrices. Details can be found
in \citet{kangwilcox}.

\subsection{Example~I}
The system has three pairs of control momentum wheels, $m=3$. In
the model~\eqref{sat_model}, the following parameter values are used
\begin{align*}\label{para_1}
  B&=\begin{bmatrix} 1& \frac{1}{20}& \frac{1}{10}\\\frac{1}{15} &1& \frac{1}{10}\\
     \frac{1}{10}& \frac{1}{15}& 1 \end{bmatrix}, &
  J&=\begin{bmatrix} 2 & 0 & 0\\ 0 & 3& 0\\ 0&0&4 \end{bmatrix}, &
  H&=\begin{bmatrix} 1 \\ 1 \\ 1\end{bmatrix}.
\end{align*}
The optimal control is
\begin{equation*}\label{sat_cost1}
\argmin_u \int_t^{T} L(v,\omega,u)\;ds+W_4\norm{v(T)}^2+W_5\norm{\omega(T)}^2
\end{equation*}
where
\begin{gather*}\label{para_1_2}
L(v,\omega,u)=\frac{W_1}{2} \norm{v}^2+\frac{W_2}{2}
              \norm{\omega}^2+\frac{W_3}{2} \norm{u}^2, \\
W_1=1,\quad
W_2=1,\quad
W_3=1/2,\quad
W_4=1,\quad
W_5=1,\quad
t=0,\quad
T=20.
\end{gather*}

Since $L$ is a convex function it can be proved that a unique solution exists in
a neighborhood of the target state.
The solution $V(t,v,\omega)$ is computed at $t=0$ for  initial states $v(0)$ and
$\omega(0)$ in two domains, $D_1$ and $D_2$, of different size,
\begin{align*}
  D_1 &= \Set*{v,\omega\in\Re^3\given
    -\frac{\pi}{6} \leq \phi, \theta, \psi \leq \frac{\pi}{6} \text{ and }
    -\frac{\pi}{8} \leq \omega_1, \omega_2, \omega_3 \leq \frac{\pi}{8}}, \\
  D_2 &= \Set*{v,\omega\in\Re^3\given
    -\frac{\pi}{3} \leq \phi, \theta, \psi \leq \frac{\pi}{3} \text{ and }
    -\frac{\pi}{4} \leq \omega_1, \omega_2, \omega_3 \leq \frac{\pi}{4}}.
\end{align*}
The computation is based on the CGL sparse grid with $q=13$. The number of
grid points for each dimension is $2^{q-6}+1=129$. The total number of grid points
in the $6$-D domain is \[\abs{\Gsp^q}=44,689,\]
which is small in comparison with the size of a dense grid,
\[\abs{\Gdn^q}=129^6>4.6\times 10^{12}.\]
In the computation, the two-pont BVP~\eqref{eq:hamchar} is solved at each
grid point in $\Gsp^q$ using a four-stage Lobatto IIIa method~\cite{kierzenka}.
The error tolerance is $10^{-12}$. The computation is carried out in Hamming, a
parallel computer of Naval Postgraduate School. Although as many as
$\abs{\Gsp^q}$
can be used, we limit the computation to $512$ CPU cores. To check the accuracy
of the overall solution, the upper bound of  $\einterp+\ebvp$ is numerically
approximated using the method in Section~\ref{sec_totalerror}. More
specifically, $1280$ points are randomly generated in $D_1$ and $D_2$. At each
sample point,  the value of $\bar V(0,v,\omega)$ is computed using
interpolation. The true value at the same point is approximated by
$\tilde V(0,v,\omega)$, which is computed by applying the BVP solver to
the equation~\eqref{eq:hamchar} with an error tolerance $10^{-9}$ in $D_1$ and
$10^{-7}$ in $D_2$. This tolerance is much smaller than $\einterp+\ebvp$ so that
the error of the BVP solver can be ignored. The difference,
$\bar V(0,v,\omega)-\tilde V(0,v,\omega)$, is an approximate of $\einterp+\ebvp$
at the sample point. In $D_1$, the mean absolute error (MAE) is $4.9\times
10^{-7}$. The MAE for the relative error is $4.0\times 10^{-7}$. In the larger
domain $D_2$, the MAE equals $3.6\times 10^{-3}$ and the relative error is
$7.3\times 10^{-4}$. The results are summarized in Table~\ref{table1}.
\begin{table}
  \centering
  \begin{tabular}{@{}cccccc@{}} \toprule
    Domain & $q$ & $\abs{\Gdn^q}$ & $\abs{\Gsp^q}$ & MAE & Relative MAE \\ \midrule
    $D_1$&$13$&$>10^{12}$&$44,698$& $4.9\times 10^{-7}$&$4.0\times 10^{-7}$\\
    $D_2$&$13$&$>10^{12}$&$44,698$& $3.6\times 10^{-3}$&$7.3\times 10^{-4}$\\
    \bottomrule
  \end{tabular}
  \caption{Summary of results for Example~I}\label{table1}
\end{table}
Figure~\ref{fig_trajectory} shows a typical trajectory in which $(v,\omega)$
converges to zero, i.e., the rigid body is stabilized.
\begin{figure}
  \centering
\begin{tikzpicture}
\matrix {
\begin{axis}[%
width=\figurewidth,
height=\figureheight,
scale only axis,
xmin=0,
xmax=20,
xlabel={time},
ymin=-1,
ymax=1.5,
ylabel={$v$}
]
\addplot [color=figcolor0,solid,forget plot]
  table[row sep=crcr]{%
0	1.0471975511966\\
0.296296296296296	0.893051417128452\\
0.777777777777778	0.72138855051856\\
1.58333333333333	0.553474423921305\\
2.54166666666667	0.437749242248953\\
3.83333333333333	0.314729553811096\\
5.77777777777778	0.173236121359662\\
9.33333333333333	0.0513642532070097\\
15.5	0.00552491018180756\\
19.7777777777778	0.00122869486372032\\
};\label{plot_solid}
\addplot [color=figcolor1,dashed,forget plot]
  table[row sep=crcr]{%
0	-0.740480489693061\\
0.296296296296296	-0.633598000840963\\
0.777777777777778	-0.506994517231086\\
1.58333333333333	-0.377645596603826\\
2.54166666666667	-0.265620459254615\\
3.83333333333333	-0.149372662744813\\
5.77777777777778	-0.0574314687166416\\
9.33333333333333	-0.0121627124436098\\
15.5	-0.00143626772781583\\
19.7777777777778	-0.000258044880405572\\
};\label{plot_dashed}
\addplot [color=figcolor2,dotted,forget plot]
  table[row sep=crcr]{%
0	1.0471975511966\\
0.296296296296296	1.22785930267678\\
0.777777777777778	1.28531997033246\\
1.58333333333333	1.14363154435223\\
2.54166666666667	0.904085728721756\\
3.83333333333333	0.630023871822248\\
5.77777777777778	0.355949036634005\\
9.33333333333333	0.122399820657945\\
15.5	0.0195166278028748\\
19.7777777777778	0.00739514567784632\\
};\label{plot_dotted}
\end{axis}
\\
\begin{axis}[%
width=\figurewidth,
height=\figureheight,
scale only axis,
xmin=0,
xmax=20,
xlabel={time},
ymin=-0.5,
ymax=1,
ylabel={$\omega$}
]
\addplot [color=figcolor0,solid,forget plot]
  table[row sep=crcr]{%
0	0\\
0.296296296296296	-0.228103501098306\\
0.777777777777778	-0.312335084275991\\
1.58333333333333	-0.239914038735321\\
2.54166666666667	-0.166871918470562\\
3.83333333333333	-0.114797151792388\\
5.77777777777778	-0.0627926433970611\\
9.33333333333333	-0.01847732115012\\
15.5	-0.00205894237205846\\
19.7777777777778	-0.000407118679681448\\
};
\addplot [color=figcolor1,dashed,forget plot]
  table[row sep=crcr]{%
0	0.785398163397448\\
0.296296296296296	0.427981047950935\\
0.777777777777778	0.126958161555234\\
1.58333333333333	-0.00774144053350878\\
2.54166666666667	-0.00231587433465037\\
3.83333333333333	0.0126674191422093\\
5.77777777777778	0.00920152027362501\\
9.33333333333333	0.00282760986488173\\
15.5	0.000439227685930856\\
19.7777777777778	0.000243268359716204\\
};
\addplot [color=figcolor2,dotted,forget plot]
  table[row sep=crcr]{%
0	0\\
0.296296296296296	-0.0703882807026952\\
0.777777777777778	-0.173868315819247\\
1.58333333333333	-0.259803312696575\\
2.54166666666667	-0.256258525532338\\
3.83333333333333	-0.192756337953466\\
5.77777777777778	-0.108935334558142\\
9.33333333333333	-0.0371197085602627\\
15.5	-0.00552270707236853\\
19.7777777777778	-0.00108442273294336\\
};
\end{axis}
\\
\begin{axis}[%
width=\figurewidth,
height=\figureheight,
scale only axis,
xmin=0,
xmax=20,
xlabel={time},
ymin=-6,
ymax=2,
ylabel={$u$}
]
\addplot [color=figcolor0,solid,forget plot]
  table[row sep=crcr]{%
0	-1.70648891800618\\
0.296296296296296	-0.465925462941883\\
0.777777777777778	0.185053387967175\\
1.58333333333333	0.180161564073538\\
2.54166666666667	0.131688281526989\\
3.83333333333333	0.158621918814463\\
5.77777777777778	0.128858879543878\\
9.33333333333333	0.0491232014591082\\
15.5	0.00720287280822893\\
19.7777777777778	0.0010563137095275\\
};
\addplot [color=figcolor1,dashed,forget plot]
  table[row sep=crcr]{%
0	-4.28397169820891\\
0.296296296296296	-2.63179227623886\\
0.777777777777778	-1.14825307545261\\
1.58333333333333	-0.371346407488113\\
2.54166666666667	-0.275731036031303\\
3.83333333333333	-0.233095989713568\\
5.77777777777778	-0.119867517174896\\
9.33333333333333	-0.0325496369427432\\
15.5	-0.00489982031000198\\
19.7777777777778	-0.000437255230382117\\
};
\addplot [color=figcolor2,dotted,forget plot]
  table[row sep=crcr]{%
0	-1.74594369835198\\
0.296296296296296	-1.36168314420169\\
0.777777777777778	-0.748323819694147\\
1.58333333333333	-0.108215975133827\\
2.54166666666667	0.13677120505612\\
3.83333333333333	0.137650364592568\\
5.77777777777778	0.0681491597535764\\
9.33333333333333	0.0220630921193882\\
15.5	0.0043403481823754\\
19.7777777777778	0.000883451459792729\\
};
\end{axis}
\\
};
\end{tikzpicture}%
   \caption{Trajectories (Solid~\ref{plot_solid}: $\phi$, $\omega_1$, $u_1$;
    dashed~\ref{plot_dashed}: $\theta$, $\omega_2$, $u_2$;
    dotted~\ref{plot_dotted}: $\psi$, $\omega_3$, $u_3$)}\label{fig_trajectory}
\end{figure}
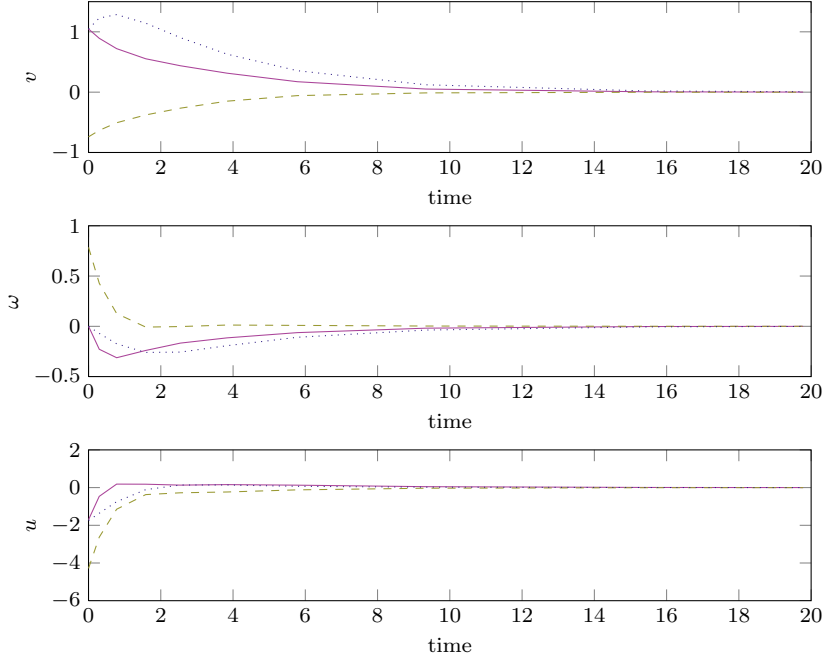
The accuracy of the solution, especially in $D_2$, is not as high as PDE solvers
in some $2$-D or $3$-D cases. This is not surprising because we sacrifice the
accuracy in exchange for a sparse grid that is tractable in $6$-D. For a fixed
dimension $d$, the accuracy of the solution at an arbitrary point depends on $q$
and the size of the domain. For a fixed $q$, the accuracy is increase when the
size of the domain is decreased. Shown in Table~\ref{table1}, decreasing the
linear dimension of $D_2$ by $50\%$ increases the accuracy by three orders of
magnitude.

\subsection{Example~II}
In the following, we consider a rigid body controlled by two pairs of momentum
wheels. Although satellite systems are quipped with at least three pairs of
control momentum wheels, malfunction may occur in some wheels. It is proved by
\citet{crouch} that this system is uncontrollable. How to stabilize the satellite
around a desired attitude is a challenging problem. Related work can be found in
\citet{tsiotras,gui,kim,krishnan,terui,horri2} and references therein. Different
from exiting results, we do no assume zero angular momentum. In addition, the
controller based on the solution of the associated HJB equation is smooth. The
optimal control is able to smoothly stabilize the rigid body at an attitude that
is closest to the desired orientation. The result in this paper is different
from those in \citet{kangwilcox} where the stabilization does not guarantee an
optimal attitude. In the present paper, we optimize a cost function that
automatically stabilizes the system at an optimal attitude. In addition, the
result is integrated with a model predictive control (MPC) to achieve feedback
stabilization in the presence of noise.

In this section, the following values are assigned to the parameters
\begin{align*}\label{para_1_3}
  B&=\begin{bmatrix} 1& \frac{1}{10}\\0 &1\\ \frac{1}{12}& 0\end{bmatrix}, &
  J&=\begin{bmatrix} 2 & 0 & 0\\ 0 & 3& 0\\ 0&0&4 \end{bmatrix}, &
  H&=\begin{bmatrix} 12 \\ 12 \\ 6 \end{bmatrix}.
\end{align*}
The optimal control is
\begin{equation}\label{sat_cost2}
\argmin_u \int_t^{T} L(v,\omega,u)\;ds \\
\end{equation}
where
\begin{gather*}\label{para_2}
L(v,\omega,u)=\frac{W_1}{2} \norm{v-v_e(v,\omega)}^2+\frac{W_2}{2}
\norm{\omega}^2+\frac{W_3}{2} \norm{u}^2, \\
W_1=1,   \quad
W_2=2,   \quad
W_3=0.5, \quad
t_0=0,   \quad
T=30.
\end{gather*}
The function $v_e(v,\omega)$ represents the optimal attitude reachable from
$(v,\omega)$. It is a known fact that this system is uncontrollable. The desired
attitude, in our example $v=0$, may not be reachable. In the case of $H=0$,
nonsmooth controllers can be derived to stabilize the system
\cite{tsiotras,gui,kim,krishnan,terui,horri2}. In the case of $H\neq 0$, a
manifold of reachable states $(v,\omega)$ satisfies~\cite{crouch}
\begin{gather*}
  C^\mathsf{T}(J\omega -R(v)H)= \text{constant},\\
  \text{for some } C\in \Re^3 \text{ such that } C^\mathsf{T} B=0.
\end{gather*}
The attitude $v_e(v,\omega)$ is a target attitude in this reachable manifold. A
satellite system may have to meet multiple requirements of orientation, such as
pointing sensors to the desired direction and at the same time keeping its solar
panel facing the Sun. But the desired attitude, for instance $v=0$, may not lie
on the manifold of reachable states. We define the following optimal
$v_e(v,\omega)$ as the target state for stabilization. For this purpose, we
minimize the Frobenius distance between $R(v)$ and $I=R(0)$. Because both
matrices are orthogonal, it is equivalent to
\begin{subequations}\label{optim_attitude}
  \begin{align}
    v_e(v,\omega)&=\argmax_{\tilde v} {\rm tr} (R(\tilde v)) \\
                 &\text{subject to }
                  -C^\mathsf{T}R(\tilde v)H=C^\mathsf{T}(J\omega -R(v)H).
  \end{align}
\end{subequations}
It can be proved that $v_e(v(t),\omega(t))$, without noise and uncertainty, is a
constant along any controlled trajectory. Therefore, it can be treated as a
constant in the derivation of the BVP~\eqref{eq:hamchar}. The solution of the
maximization problem~\eqref{optim_attitude} can be found by algorithms of
numerical nonlinear programming.

The HJB equation is solved at $t=0$ on a sparse grid in the domain
\begin{equation*}
  D = \Set*{v,\omega\in\Re^3\given
        -\frac{\pi}{6} \leq \phi, \theta, \psi \leq \frac{\pi}{6}
        \text{ and }
        -\frac{\pi}{8} \leq \omega_1, \omega_2, \omega_3 \leq \frac{\pi}{8}}.
\end{equation*}
The CGL sparse grid of $q=13$ is used. Similar to the approach in Example~I, we
solve the HJB equation at the grid points using $512$ CPU cores in parallel. Then
the accuracy is checked at a random of $1280$ point in $D$.  The MAE of the
relative error is $8.5\times 10^{-3}$.  A typical trajectory is shown in
Figure~\ref{fig_traj}.
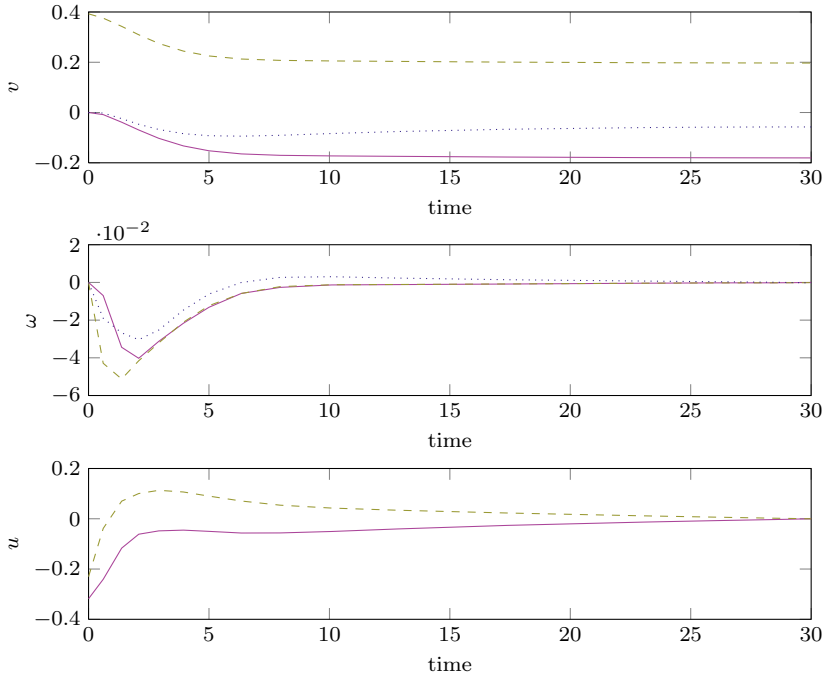
\begin{figure}
  \centering
\begin{tikzpicture}
\matrix{
\begin{axis}[%
width=\figurewidth,
height=\figureheight,
scale only axis,
xmin=0,
xmax=30,
xlabel={time},
ymin=-0.2,
ymax=0.4,
ylabel={$v$}
]
\addplot [color=figcolor0,solid,forget plot]
  table[row sep=crcr]{%
0	0\\
0.609375	-0.00833175921791912\\
1.375	-0.0385561876159122\\
2.08333333333333	-0.0694767193245362\\
2.91666666666667	-0.10264579667894\\
3.9375	-0.132838899755674\\
5	-0.152218033835477\\
6.34375	-0.164665054546017\\
7.95833333333333	-0.17031303094758\\
10.09375	-0.172605539164885\\
12.8125	-0.174289037562987\\
17.3958333333333	-0.177121752323473\\
23.046875	-0.179392208677919\\
26.1111111111111	-0.180028504419029\\
28.4259259259259	-0.180359099637826\\
30	-0.180552777099657\\
};
\addplot [color=figcolor1,dashed,forget plot]
  table[row sep=crcr]{%
0	0.392699081698724\\
0.609375	0.375410639913423\\
1.375	0.342816469923428\\
2.08333333333333	0.30937040826903\\
2.91666666666667	0.274814261347161\\
3.9375	0.244396191265451\\
5	0.225048656096623\\
6.34375	0.212832144289528\\
7.95833333333333	0.207451487727789\\
10.09375	0.205244372458324\\
12.8125	0.203440283947918\\
17.3958333333333	0.200500860050175\\
23.046875	0.198209980308088\\
26.1111111111111	0.197524457027088\\
28.4259259259259	0.197116275053862\\
30	0.196773574457021\\
};
\addplot [color=figcolor2,dotted,forget plot]
  table[row sep=crcr]{%
0	0\\
0.609375	-0.00206097547290578\\
1.375	-0.0243447928490246\\
2.08333333333333	-0.0467496929848373\\
2.91666666666667	-0.0675202344178847\\
3.9375	-0.0840944321659692\\
5	-0.0922578774009528\\
6.34375	-0.0941028327769731\\
7.95833333333333	-0.0905978719103191\\
10.09375	-0.0835495733072006\\
12.8125	-0.0756960847943406\\
17.3958333333333	-0.0665050036988447\\
23.046875	-0.0598825386950542\\
26.1111111111111	-0.0580134805086106\\
28.4259259259259	-0.0573954100610571\\
30	-0.0574016235994851\\
};
\end{axis}
\\
\begin{axis}[%
width=\figurewidth,
height=\figureheight,
scale only axis,
xmin=0,
xmax=30,
xlabel={time},
ymin=-0.06,
ymax=0.02,
ylabel={$\omega$}
]
\addplot [color=figcolor0,solid,forget plot]
  table[row sep=crcr]{%
0	0\\
0.609375	-0.00696798320322616\\
1.375	-0.034402141927204\\
2.08333333333333	-0.0402448880065962\\
2.91666666666667	-0.0312661375964471\\
3.9375	-0.0216742853391357\\
5	-0.0131928078147518\\
6.34375	-0.00584139114551901\\
7.95833333333333	-0.00262641287434505\\
10.09375	-0.00133286148178633\\
12.8125	-0.0011520478084235\\
17.3958333333333	-0.000854878804381666\\
23.046875	-0.000424217941583042\\
26.1111111111111	-0.000237204165733264\\
28.4259259259259	-0.000209570763808833\\
30	-3.7122071996669e-05\\
};
\addplot [color=figcolor1,dashed,forget plot]
  table[row sep=crcr]{%
0	0\\
0.609375	-0.0428918499964976\\
1.375	-0.0512378979898652\\
2.08333333333333	-0.0415787246605317\\
2.91666666666667	-0.0320756139370365\\
3.9375	-0.0211182762167648\\
5	-0.0123355293338229\\
6.34375	-0.00586357782646436\\
7.95833333333333	-0.00222603213983805\\
10.09375	-0.00122597932448041\\
12.8125	-0.00109938454141474\\
17.3958333333333	-0.000811195014414781\\
23.046875	-0.000410582321146871\\
26.1111111111111	-0.000246463930477075\\
28.4259259259259	-0.000207409820201954\\
30	-0.000112462963939527\\
};
\addplot [color=figcolor2,dotted,forget plot]
  table[row sep=crcr]{%
0	0\\
0.609375	-0.0189674190635891\\
1.375	-0.0267360252205321\\
2.08333333333333	-0.0303840501564858\\
2.91666666666667	-0.0252552425310248\\
3.9375	-0.0147056145499386\\
5	-0.00617347565180352\\
6.34375	-3.63588514964474e-05\\
7.95833333333333	0.00267514311656748\\
10.09375	0.00302893931385838\\
12.8125	0.0023044195217443\\
17.3958333333333	0.00142635842395734\\
23.046875	0.000719846356591077\\
26.1111111111111	0.000366176871018977\\
28.4259259259259	0.000100157713040182\\
30	-0.000254035703661541\\
};
\end{axis}
\\
\begin{axis}[%
width=\figurewidth,
height=\figureheight,
scale only axis,
xmin=0,
xmax=30,
xlabel={time},
ymin=-0.4,
ymax=0.2,
ylabel={$u$}
]
\addplot [color=figcolor0,solid,forget plot]
  table[row sep=crcr]{%
0	-0.318140535468118\\
0.609375	-0.240706586200661\\
1.375	-0.116817110292916\\
2.08333333333333	-0.0614441800886596\\
2.91666666666667	-0.048153754521038\\
3.9375	-0.045200261602928\\
5	-0.0501002164319836\\
6.34375	-0.0567187468126443\\
7.95833333333333	-0.0563156081168376\\
10.09375	-0.0505845131469702\\
12.8125	-0.0405542180473966\\
17.3958333333333	-0.026321975598537\\
23.046875	-0.0131945392829119\\
26.1111111111111	-0.00724177589519132\\
28.4259259259259	-0.00318341409021684\\
30	0\\
};
\addplot [color=figcolor1,dashed,forget plot]
  table[row sep=crcr]{%
0	-0.230242244506262\\
0.609375	-0.0382506989950015\\
1.375	0.0700977573720523\\
2.08333333333333	0.100386914207199\\
2.91666666666667	0.113168397820585\\
3.9375	0.106553530123427\\
5	0.0901126471283591\\
6.34375	0.070638190813047\\
7.95833333333333	0.0540296098887216\\
10.09375	0.042556000692799\\
12.8125	0.0339903825062828\\
17.3958333333333	0.0228985509136091\\
23.046875	0.0116451903736274\\
26.1111111111111	0.00627673994512388\\
28.4259259259259	0.00229525645661863\\
30	0\\
};
\end{axis}
\\
};
\end{tikzpicture}%
   \caption{Trajectories (Solid~\ref{plot_solid}: $\phi$, $\omega_1$, $u_1$;
    dashed~\ref{plot_dashed}: $\theta$, $\omega_2$, $u_2$;
    dotted~\ref{plot_dotted}: $\psi$, $\omega_3$)}\label{fig_traj}
\end{figure}

In an optimal control design based on HJB equations, the most computationally
intensive part, i.e., solving the HJB equation, is done off-line. Once the
equation is solved, the real-time closed-loop control can be computed using
interpolation with a minimum computational load. In the following simulations of
closed-loop control, the solution of the HJB equation in Example~II is
integrated with a MPC to stabilize the system at a desired optimal attitude. We
adopt a basic zero-order hold MPC controller. The sampling rate is 10 Hz. In
each time interval the optimal control $u^{*}$ is computed using interpolation
on the sparse grid. It is assumed that the measurement of $v$ and $\omega$ is
noise-corrupted. The noise has a uniform distribution with a magnitude about
$0.5\%$ of the maximum state value. Several trajectories under the closed-loop
control are shown in Figures~\ref{fig_traj1}--\ref{fig_traj3}.

\begin{figure}
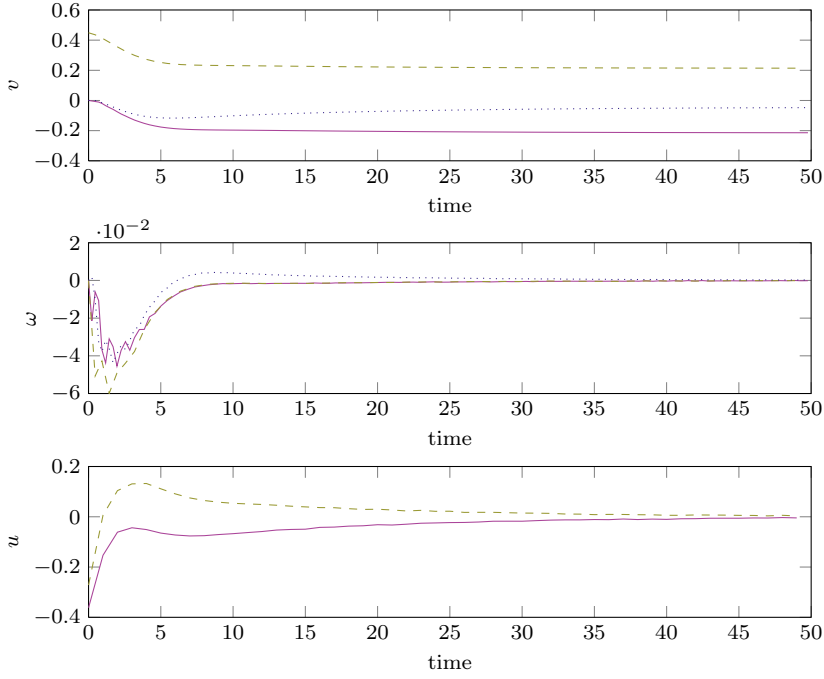

  \centering
%
   \caption{Closed-loop trajectories (Solid~\ref{plot_solid}: $\phi$, $\omega_1$, $u_1$;
    dashed~\ref{plot_dashed}: $\theta$, $\omega_2$, $u_2$;
    dotted~\ref{plot_dotted}: $\psi$, $\omega_3$)}\label{fig_traj3}
\end{figure}

\subsection{Example~III}
In previous examples, the closed-loop control is based on a fixed
horizon, $[0, T]$. The MPC algorithm uses the value of $u(0,x)$ in
feedback. In the following, we exemplify the method of computing
$u(t,x)$ for $t$ in a given interval $[0, t_1]$, where $t_1\leq
T$. Because of the causality free property, it is straightforward to
include the time variable as an additional dimension in the sparse
grid. Consider the following optimal control problem
\begin{align*}
\dot x_1 &= -x_1+x_2,\\
\dot x_2 &= -x_2 +\frac{x_3}{1+x_1^2+x_2^2}, \\
\dot x_3 &= \left(-2x_1^2+2x_1x_2-2x_2^2+\frac{2x_2x_3}{1+x_1^2+x_2^2}  \right) \frac{x_3}{1+x_1^2+x_2^2} + (1+x_1^2+x_2^2)u,
\end{align*}
where the cost functional is
\begin{equation*}
 \frac{1}{2} \int_{t_0}^T \frac{x_3^2}{{(1+x_1^2+x_2^2)}^2} +u^2\; dt.
\end{equation*}
For any initial condition, $x(t_0)=x_0$, an optimal control exists
that achieves a minimum value, $V(t_0,x_0)$. Let $z =(t,x)$ represents
a point in the time-state space. The sparse grid is generated in the
following region in $\Re\times \Re^3$
\[D=\left\{ z=(t,x) \in \Re\times \Re^3: \;\; t\in [0, 5] \mbox{ and } x_i\in [-2, 2], \mbox{ for } i=1,2,3 \right\}.\]
For each grid point $z^\bi_\bj$, the minimum value, $V$, and the
associated costate, $\vec{\lambda}$, at $z^\bi_\bj$ are computed by
solving \eqref{eq:hamchar}. Different from previous examples, the
sparse grid is generated in $(t, x)$-space so that one can approximate
$V(t, x)$ using an interpolation for $t\geq 0$ in a finite
interval. As a result, the MPC feedback can be applied using a
variable time horizon.

The problem has a known solution
\[ V(t,x)=\frac{1}{2} \frac{x_3^2}{(1+x_1^2+x_2^2)^2} \tanh (T-t), \;\;  u^\ast (t, x,)=-\frac{x_3}{(1+x_1^2+x_2^2)}\tanh (T-t). \]
It is used to check the accuracy of the numerical result.  In the
computation, we adopted a CGL sparse grid with $q=12$. The number of
grid points for each dimension is $2^{q-4}+1=257$. The total number of
grid points in the $4$-D domain is \[\abs{\Gsp^q}=18,945,\]
in comparison to dense grids, a finite difference discretization using
$33$ points in each dimension results in a grid size greater than
$1.18\times 10^6$, which is impractical for personal computers.

At all points in the sparse grid, we set the error tolerance to be
$10^{-7}$.  The upper bound of $\einterp+\ebvp$ is numerically
approximated using the method in Section~\ref{sec_totalerror}. More
specifically, $1200$ points are randomly generated in $D$. At each
sample point,  the value of $ V(t,x)$ is computed using
interpolation. The MAE is $8.5\times
10^{-4}$. Figure~\ref{fig_hist_exampleIII} is the histogram of the
approximation error, which shows a distribution concentrated around
the MAE with a small variance $2.8\times 10^{-6}$.
\begin{figure}
  \centering
\begin{tikzpicture}

\begin{axis}[%
  width=3.0in,
  height=2in,
  scaled ticks=false,
  colormap={mymap}{[1pt] rgb(0pt)=(0.2081,0.1663,0.5292); rgb(1pt)=(0.211624,0.189781,0.577676); rgb(2pt)=(0.212252,0.213771,0.626971); rgb(3pt)=(0.2081,0.2386,0.677086); rgb(4pt)=(0.195905,0.264457,0.7279); rgb(5pt)=(0.170729,0.291938,0.779248); rgb(6pt)=(0.125271,0.324243,0.830271); rgb(7pt)=(0.0591333,0.359833,0.868333); rgb(8pt)=(0.0116952,0.38751,0.881957); rgb(9pt)=(0.00595714,0.408614,0.882843); rgb(10pt)=(0.0165143,0.4266,0.878633); rgb(11pt)=(0.0328524,0.443043,0.871957); rgb(12pt)=(0.0498143,0.458571,0.864057); rgb(13pt)=(0.0629333,0.47369,0.855438); rgb(14pt)=(0.0722667,0.488667,0.8467); rgb(15pt)=(0.0779429,0.503986,0.838371); rgb(16pt)=(0.0793476,0.520024,0.831181); rgb(17pt)=(0.0749429,0.537543,0.826271); rgb(18pt)=(0.0640571,0.556986,0.823957); rgb(19pt)=(0.0487714,0.577224,0.822829); rgb(20pt)=(0.0343429,0.596581,0.819852); rgb(21pt)=(0.0265,0.6137,0.8135); rgb(22pt)=(0.0238905,0.628662,0.803762); rgb(23pt)=(0.0230905,0.641786,0.791267); rgb(24pt)=(0.0227714,0.653486,0.776757); rgb(25pt)=(0.0266619,0.664195,0.760719); rgb(26pt)=(0.0383714,0.674271,0.743552); rgb(27pt)=(0.0589714,0.683757,0.725386); rgb(28pt)=(0.0843,0.692833,0.706167); rgb(29pt)=(0.113295,0.7015,0.685857); rgb(30pt)=(0.145271,0.709757,0.664629); rgb(31pt)=(0.180133,0.717657,0.642433); rgb(32pt)=(0.217829,0.725043,0.619262); rgb(33pt)=(0.258643,0.731714,0.595429); rgb(34pt)=(0.302171,0.737605,0.571186); rgb(35pt)=(0.348167,0.742433,0.547267); rgb(36pt)=(0.395257,0.7459,0.524443); rgb(37pt)=(0.44201,0.748081,0.503314); rgb(38pt)=(0.487124,0.749062,0.483976); rgb(39pt)=(0.530029,0.749114,0.466114); rgb(40pt)=(0.570857,0.748519,0.44939); rgb(41pt)=(0.609852,0.747314,0.433686); rgb(42pt)=(0.6473,0.7456,0.4188); rgb(43pt)=(0.683419,0.743476,0.404433); rgb(44pt)=(0.71841,0.741133,0.390476); rgb(45pt)=(0.752486,0.7384,0.376814); rgb(46pt)=(0.785843,0.735567,0.363271); rgb(47pt)=(0.818505,0.732733,0.34979); rgb(48pt)=(0.850657,0.7299,0.336029); rgb(49pt)=(0.882433,0.727433,0.3217); rgb(50pt)=(0.913933,0.725786,0.306276); rgb(51pt)=(0.944957,0.726114,0.288643); rgb(52pt)=(0.973895,0.731395,0.266648); rgb(53pt)=(0.993771,0.745457,0.240348); rgb(54pt)=(0.999043,0.765314,0.216414); rgb(55pt)=(0.995533,0.786057,0.196652); rgb(56pt)=(0.988,0.8066,0.179367); rgb(57pt)=(0.978857,0.827143,0.163314); rgb(58pt)=(0.9697,0.848138,0.147452); rgb(59pt)=(0.962586,0.870514,0.1309); rgb(60pt)=(0.958871,0.8949,0.113243); rgb(61pt)=(0.959824,0.921833,0.0948381); rgb(62pt)=(0.9661,0.951443,0.0755333); rgb(63pt)=(0.9763,0.9831,0.0538)},
xmin=-0.01,
xmax=0.01,
xlabel={Approximation error for $V(t,x)$},
ymin=0,
ymax=900,
ylabel={Number sample points},
]

\addplot[table/row sep=crcr,patch,patch type=rectangle,shader=flat corner,draw=black,forget plot,patch table with point meta={%
1	2	3	4	1\\
6	7	8	9	1\\
11	12	13	14	1\\
16	17	18	19	1\\
21	22	23	24	1\\
26	27	28	29	1\\
31	32	33	34	1\\
36	37	38	39	1\\
41	42	43	44	1\\
46	47	48	49	1\\
}]
table[row sep=crcr] {%
x	y\\
-0.00877488885051875	0\\
-0.00877488885051875	0\\
-0.00877488885051875	3\\
-0.00695909822414966	3\\
-0.00695909822414966	0\\
-0.00695909822414966	0\\
-0.00695909822414966	0\\
-0.00695909822414966	9\\
-0.00514330759778058	9\\
-0.00514330759778058	0\\
-0.00514330759778058	0\\
-0.00514330759778058	0\\
-0.00514330759778058	32\\
-0.00332751697141149	32\\
-0.00332751697141149	0\\
-0.00332751697141149	0\\
-0.00332751697141149	0\\
-0.00332751697141149	74\\
-0.00151172634504241	74\\
-0.00151172634504241	0\\
-0.00151172634504241	0\\
-0.00151172634504241	0\\
-0.00151172634504241	827\\
0.000304064281326677	827\\
0.000304064281326677	0\\
0.000304064281326677	0\\
0.000304064281326677	0\\
0.000304064281326677	182\\
0.00211985490769576	182\\
0.00211985490769576	0\\
0.00211985490769576	0\\
0.00211985490769576	0\\
0.00211985490769576	43\\
0.00393564553406485	43\\
0.00393564553406485	0\\
0.00393564553406485	0\\
0.00393564553406485	0\\
0.00393564553406485	17\\
0.00575143616043393	17\\
0.00575143616043393	0\\
0.00575143616043393	0\\
0.00575143616043393	0\\
0.00575143616043393	6\\
0.00756722678680302	6\\
0.00756722678680302	0\\
0.00756722678680302	0\\
0.00756722678680302	0\\
0.00756722678680302	7\\
0.0093830174131721	7\\
0.0093830174131721	0\\
0.0093830174131721	0\\
};
\end{axis}
\end{tikzpicture}
  \caption{The histogram of approximation errors for $V(t,x)$ in
    $D$}\label{fig_hist_exampleIII}
\end{figure}
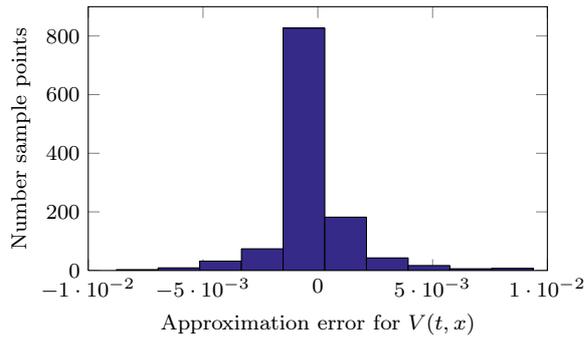

In the MPC feedback, we use a variable time
window. Figure~\ref{fig_closedloop_exampleIII} is a trajectory under
the closed-loop control. The initial time is $t_0=0$. Then the control
input is updated using a fixed time step, $\Delta t =1/7$. At each
$t_k < 5$, the time window for the feedback is $T-t_k$. An
interpolation on the sparse grid is used to compute
$\vec{\lambda}(t_k, x_k)$. Then control input $u(t_k,x_k)$ is applied
to the system. At $t=5$, the initial time is set to zero and the
process repeats. In the simulations, random noise of uniform
distribution in $[-0.05, 0.05]$ is added to the state variables.
\begin{figure}
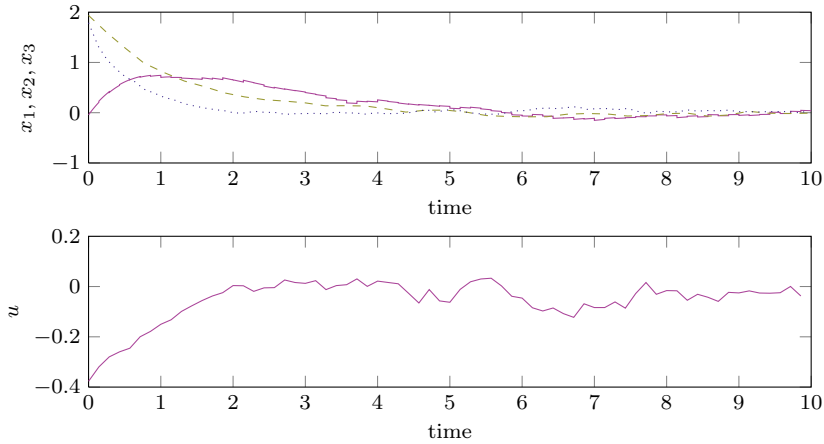

  \centering
  \caption{ Closed-loop trajectories (Solid~\ref{plot_solid}: $x_1$ and $u$;
    dashed~\ref{plot_dashed}: $x_2$;
    dotted~\ref{plot_dotted}: $x_3$)}\label{fig_closedloop_exampleIII}
\end{figure}

\section{Conclusions}
The characteristics method addressed in this paper is causality free and has
perfect parallelism. It can be easily integrated in a parallel computer with any
grid, such as sparse grids to mitigate the curse of dimensionality. The examples
show that the algorithm is tractable for $6$-D HJB equations. In the case of the
attitude control of rigid body with six state variables, a solution is computed
on a sparse grid with about $4.5 \times 10^5$ points, whereas the corresponding
dense grid has more than $10^{12}$ points.
In another example, the method is applied to a sparse grid in the
time-state space so that the solution can be approximated within a
given time interval.
A theorem on the error upper bound is
proved. In addition to the parallelism, another advantage of the causality free
method is that the accuracy of the interpolated solution can be numerically
checked pointwise. The effectiveness of the numerical solution for closed-loop
control is demonstrated in Example~II using a MPC controller.

For future research, an interesting question is how to generalize the
idea to problems with an infinite time horizon. Another interesting
research topic is to solve minimum-time problems. In the presence of
multiple solutions, identifying the globally or locally optimal
solution remains a challenge that needs to be addressed.

\begin{acknowledgements}
  This work was supported in part by AFOSR, DARPA, NRL, and CRUSER of Naval
  Postgraduate School.  Thanks to Carlos~F. Borges for his
  time discussing terminology and the naming of the method. Thanks to Arthur~J.
  Krener for his insights on HJB solutions. Thanks to Lars Gr\"une for his
  comments on the parallel computation of reachable set~\cite{jahn}.
\end{acknowledgements}

\bibliographystyle{spmpscinat}

\begin{thebibliography}{32}
\providecommand{\natexlab}[1]{#1}
\providecommand{\url}[1]{#1}
\providecommand{\urlprefix}{URL }
\expandafter\ifx\csname urlstyle\endcsname\relax
  \providecommand{\doi}[1]{DOI~\discretionary{}{}{}#1}\else
  \providecommand{\doi}{DOI~\discretionary{}{}{}\begingroup
  \urlstyle{rm}\Url}\fi

\bibitem[{Al'brekht(1961)}]{albrekht}
Al'brekht, E.G.: On the optimal stabilization of nonlinear systems.
\newblock Journal of Applied Mathematics and Mechanics \textbf{25}(5),
  1254--1266 (1961).
\newblock \doi{10.1016/0021-8928(61)90005-3}

\bibitem[{Bardi and Capuzzo-Dolcetta(1997)}]{bardi-capuzzo}
Bardi, M., Capuzzo-Dolcetta, I.: Optimal Control and Viscosity Solutions of
  {Hamilton--Jacobi--Bellman} Equations.
\newblock Systems \& Control: incollection \& Applications. Birkh{\"a}user
  Boston (1997).
\newblock \doi{10.1007/978-0-8176-4755-1_1}

\bibitem[{Barron and Jensen(1986)}]{barron-jensen}
Barron, E.N., Jensen, R.: The {Pontryagin} maximum principle from dynamic
  programming and viscosity solutions to first-order partial differential
  equations.
\newblock Transactions of the American Mathematical Society \textbf{298}(2),
  635--635 (1986).
\newblock \doi{10.1090/s0002-9947-1986-0860384-4}

\bibitem[{Barthelmann et~al.(2000)Barthelmann, Novak, and Ritter}]{barthelmann}
Barthelmann, V., Novak, E., Ritter, K.: High dimensional polynomial
  interpolation on sparse grids.
\newblock Advances in Computational Mathematics \textbf{12}(4), 273--288
  (2000).
\newblock \doi{10.1023/A:1018977404843}

\bibitem[{Bokanowski et~al.(2013)Bokanowski, Garcke, Griebel, and
  Klompmaker}]{bokanowski}
Bokanowski, O., Garcke, J., Griebel, M., Klompmaker, I.: An adaptive sparse
  grid semi-{Lagrangian} scheme for first order {Hamilton-Jacobi} {Bellman}
  equations.
\newblock Journal of Scientific Computing \textbf{55}(3), 575--605 (2013).
\newblock \doi{10.1007/s10915-012-9648-x}

\bibitem[{Bungartz and Griebel(2004)}]{bungartz}
Bungartz, H.J., Griebel, M.: Sparse grids.
\newblock Acta Numerica \textbf{13}, 147--269 (2004).
\newblock \doi{10.1017/S0962492904000182}

\bibitem[{Cacace et~al.(2012)Cacace, Cristiani, Falcone, and
  Picarelli}]{cacace}
Cacace, S., Cristiani, E., Falcone, M., Picarelli, A.: A patchy dynamic
  programming scheme for a class of {Hamilton--Jacobi--Bellman} equations.
\newblock {SIAM} Journal on Scientific Computing \textbf{34}(5), A2625--A2649
  (2012).
\newblock \doi{10.1137/110841576}

\bibitem[{Crouch(1984)}]{crouch}
Crouch, P.E.: Spacecraft attitude control and stabilization: Applications of
  geometric control theory to rigid body models.
\newblock IEEE Transactions on Automatic Control \textbf{29}(4), 321--331
  (1984).
\newblock \doi{10.1109/TAC.1984.1103519}

\bibitem[{Delvos(1982)}]{delvos}
Delvos, F.-J.: $d$-variate {Boolean} interpolation.
\newblock Journal of Approximation Theory \textbf{34}(2), 99--114 (1982).
\newblock \doi{10.1016/0021-9045(82)90085-5}

\bibitem[{Diebel(2006)}]{diebel}
Diebel, J.: Representing attitude: {Euler} angles, unit quaternions, and
  rotation vectors (2006).
\newblock
  \urlprefix\url{http://www.astro.rug.nl/software/kapteyn//_downloads/attitude.pdf}

\bibitem[{Falcone and Ferretti(2013)}]{falcone2}
Falcone, M., Ferretti, R.: Semi-Lagrangian Approximation Schemes for Linear and
  Hamilton-Jacobi Equations.
\newblock Society for Industrial and Applied Mathematics (2013).
\newblock \doi{10.1137/1.9781611973051}

\bibitem[{Fleming and Soner(2006)}]{Fleming-Soner}
Fleming, W.H., Soner, H.M.: Controlled Markov Processes and Viscosity
  Solutions, \emph{Stochastic Modelling and Applied Probability}, vol.~25.
\newblock Springer New York (2006).
\newblock \doi{10.1007/0-387-31071-1_1}

\bibitem[{Garcke(2006)}]{garcke}
Garcke, J.: Sparse grid tutorial (2006)

\bibitem[{Gui et~al.(2013)Gui, Jin, and Xu}]{gui}
Gui, H., Jin, L., Xu, S.: Attitude maneuver control of a two-wheeled spacecraft
  with bounded wheel speeds.
\newblock Acta Astronautica \textbf{88}, 98--107 (2013).
\newblock \doi{10.1016/j.actaastro.2013.03.006}

\bibitem[{Hesthaven et~al.(2007)Hesthaven, Gottlieb, and Gottlieb}]{hesthaven}
Hesthaven, J.S., Gottlieb, S., Gottlieb, D.: Spectral Methods for
  Time-Dependent Problems.
\newblock Cambridge Monographs on Applied and Computational Mathematics.
  Cambridge University Press (2007).
\newblock \doi{10.1017/CBO9780511618352}

\bibitem[{Horri and Hodgart(2003)}]{horri2}
Horri, N.M., Hodgart, S.: Attitude stabilization of an underactuated satellite
  using two wheels.
\newblock In: Aerospace Conference, 2003. Proceedings. 2003 IEEE, vol.~6, pp.
  6\_2629--6\_2635 (2003).
\newblock \doi{10.1109/AERO.2003.1235188}

\bibitem[{Jahn(2015)}]{jahn}
Jahn, T.U.: A feasibility problem approach for reachable set approximation.
\newblock Ph.D. thesis, University of Bayreuth, Bayreuth, Germany (2015).
\newblock \urlprefix\url{https://epub.uni-bayreuth.de/2087/}

\bibitem[{Kang et~al.(1992)Kang, De, and Isidori}]{kang-isidori}
Kang, W., De, P.K., Isidori, A.: Flight control in a windshear via nonlinear
  $h_\infty$ methods.
\newblock In: Proceedings of the 31st IEEE Conference on Decision and Control,
  pp. 1135--1142 (1992).
\newblock \doi{10.1109/CDC.1992.371539}

\bibitem[{Kang and Wilcox(2015)}]{kangwilcox}
Kang, W., Wilcox, L.: A causality free computational method for {HJB} equations
  with application to rigid body satellites.
\newblock In: AIAA Guidance, Navigation, and Control Conference, AIAA
  2015-2009. American Institute of Aeronautics and Astronautics (2015).
\newblock \doi{10.2514/6.2015-2009}

\bibitem[{Kierzenka and Shampine(2008)}]{kierzenka}
Kierzenka, J., Shampine, L.F.: A {BVP} solver that controls residual and error.
\newblock Journal of Numerical Analysis, Industrial and Applied Mathematics
  \textbf{3}(1--2), 27--41 (2008).
\newblock
  \urlprefix\url{http://jnaiam.org/index.php?/archives/57-A-BVP-Solver-that-Controls-Residual-and-Error.html}

\bibitem[{Kim and Kim(2001)}]{kim}
Kim, S., Kim, Y.: Spin-axis stabilization of a rigid spacecraft using two
  reaction wheels.
\newblock Journal of Guidance, Control, and Dynamics \textbf{24}(5), 1046--1049
  (2001).
\newblock \doi{10.2514/2.4818}

\bibitem[{Klimke(2006)}]{klimke}
Klimke, A.: Uncertainty modeling using fuzzy arithmetic and sparse grids.
\newblock Ph.D. thesis, Universit\"at Stuttgart, Shaker Verlag, Aachen (2006)

\bibitem[{Krishnan et~al.(1995)Krishnan, McClamroch, and Reyhanoglu}]{krishnan}
Krishnan, H., McClamroch, N.H., Reyhanoglu, M.: Attitude stabilization of a
  rigid spacecraft using two momentum wheel actuators.
\newblock Journal of Guidance, Control, and Dynamics \textbf{18}(2), 256--263
  (1995).
\newblock \doi{10.2514/3.21378}

\bibitem[{Lukes(1969)}]{lukes}
Lukes, D.L.: Optimal regulation of nonlinear dynamical systems.
\newblock SIAM Journal on Control \textbf{7}(1), 75--100 (1969).
\newblock \doi{10.1137/0307007}

\bibitem[{Navasca and Krener(2007)}]{navasca}
Navasca, C., Krener, A.J.: Patchy solutions of {Hamilton--Jacobi--Bellman}
  partial differential equations.
\newblock In: Chiuso, A., Pinzoni, S., Ferrante, A. (eds.) Modeling, Estimation
  and Control, \emph{Lecture Notes in Control and Information Sciences}, vol.
  364, pp. 251--270. Springer Berlin Heidelberg (2007).
\newblock \doi{10.1007/978-3-540-73570-0_20}

\bibitem[{Pontryagin(1962)}]{pontryagin}
Pontryagin, L.S.: The Mathematical Theory of Optimal Processes.
\newblock Interscience (1962)

\bibitem[{Smolyak(1963)}]{smolyak}
Smolyak, S.A.: Quadrature and interpolation formulas for tensor products of
  certain classes of functions.
\newblock In: Dokl. Akad. Nauk SSSR, vol.~4, pp. 240--243 (1963)

\bibitem[{Terui et~al.(2000)Terui, Kawamoto, Fujiwara, Noda, Sako, and
  Nakasuka}]{terui}
Terui, F., Kawamoto, S., Fujiwara, T., Noda, A., Sako, N., Nakasuka, S.: Target
  tracking attitude maneuver of a bias momentum micro satellite using two
  wheels.
\newblock In: AIAA Guidance, Navigation, and Control Conference and Exhibit,
  AIAA 2000-4114. American Institute of Aeronautics and Astronautics (2000).
\newblock \doi{10.2514/6.2000-4144}

\bibitem[{Tsiotras and Luo(2000)}]{tsiotras}
Tsiotras, P., Luo, J.: Control of underactuated spacecraft with bounded inputs.
\newblock Automatica \textbf{36}(8), 1153--1169 (2000).
\newblock \doi{10.1016/S0005-1098(00)00025-X}

\bibitem[{Wasilkowski and Wozniakowski(1995)}]{wasilkowski}
Wasilkowski, G.W., Wozniakowski, H.: Explicit cost bounds of algorithms for
  multivariate tensor product problems.
\newblock Journal of Complexity \textbf{11}(1), 1--56 (1995).
\newblock \doi{10.1006/jcom.1995.1001}

\bibitem[{Xiu and Hesthaven(2005)}]{xiu}
Xiu, D., Hesthaven, J.S.: High-order collocation methods for differential
  equations with random inputs.
\newblock SIAM Journal on Scientific Computing \textbf{27}(3), 1118--1139
  (2005).
\newblock \doi{10.1137/040615201}

\bibitem[{Zenger(1991)}]{zenger}
Zenger, C.: Sparse grids.
\newblock In: Hackbusch, W., GAMM, G.f.A.M.u.M. (eds.) Parallel Algorithms for
  Partial Differential Equations - {P}roceedings of the sixth {GAMM}-seminar -
  {K}iel, {J}anuary 19-21, 1990, vol.~31. Vieweg, Braunschweig (1991)

\end{thebibliography}

\end{document}